\documentclass[11pt]{amsart}
\usepackage{amsthm,amsfonts,amsmath,amssymb}
\usepackage[utf8]{inputenc}
\usepackage{bm}
\usepackage{mathtools}
\usepackage{enumitem}
\usepackage{listings}
\usepackage{mathrsfs}
\usepackage{fancybox}
\usepackage{dsfont}
\usepackage{float}
\usepackage{arcs}
\usepackage{pgf,tikz}
\usetikzlibrary{arrows}
\usepackage{graphicx}
\usepackage{color}
\usepackage[all]{xy}

\usepackage{tikz}
   \usetikzlibrary{calc}

\newcommand{\C}{\mathbb{C}}
\newcommand{\R}{\mathbb{R}}
\newcommand{\Z}{\mathbb{Z}}
\newcommand{\cS}{\mathcal{S}}
\newcommand{\cH}{\mathcal{L}}
\newcommand{\cC}{\mathcal C}
\newcommand{\cD}{\mathcal D}
\newcommand{\ct}{C}
\newcommand{\st}{S}

\newcommand{\N}{\mathbb{N}}

\newcommand{\cp}[1]{\C P^{#1}}

\newcommand{\jm}{j_{\s{-}}}
\newcommand{\jp}{j_{\s{\s{+}}}}

\renewcommand{\l}[1]{{\color{blue} \sf  [#1]}}

\newcommand{\s}[1]{\scalebox{0.7}{#1}}

\definecolor{bluegray}{rgb}{0.4, 0.6, 0.8}
\renewcommand{\emph}[1]{{\it {\color{bluegray} #1}}}

\newcommand{\V}{\mathscr V}

\DeclareMathOperator{\Log}{Log}
\DeclareMathOperator{\Arg}{Arg}
\DeclareMathOperator{\im}{im}
\DeclareMathOperator{\ind}{ind}
\DeclareMathOperator{\lcm}{lcm}
\DeclareMathOperator{\id}{id}

\DeclareMathOperator{\Sym}{Sym}

\newtheorem{Proposition}{Proposition}[section]

\newtheorem{Definition}[Proposition]{Definition}
\newtheorem{Lemma}[Proposition]{Lemma}
\newtheorem{Remark}[Proposition]{Remark}
\newtheorem{Theorem}{Theorem}

\newtheorem{Corollary}[Theorem]{Corollary}

\addtolength{\oddsidemargin}{-.8in}
\addtolength{\evensidemargin}{-.8in}
\addtolength{\textwidth}{1.6in}

\date{}

\begin{document}
 \title{Braid monodromy of univariate fewnomials}

\author{Alexander Esterov and  Lionel Lang}

\maketitle

\begin{abstract}
Let $\cC_d\subset \C^{d+1}$ be the space of non-singular, univariate polynomials of degree $d$. The Vi\`{e}te map $\V : \cC_d \rightarrow \Sym_d(\C)$ sends a polynomial to its unordered set of roots. It is a classical fact that the induced map $\V_*$  at the level of fundamental groups realises an isomorphism between $\pi_1(\cC_d)$ and the Artin braid group $B_d$. For fewnomials, or equivalently for the intersection $\cC$ of $\cC_d$ with a collection of coordinate hyperplanes in $\C^{d+1}$, the image of the map  $ \V_* : \pi_1(\cC) \rightarrow B_d$ is not known in general.

In the present paper, we show that the map $\V_*$ is surjective provided that the support of the corresponding polynomials spans $\Z$ as an affine lattice. If the support spans a strict sublattice of index $b$, we show that the image of $\V_*$ is the expected wreath product of $\Z/b\Z$ with $B_{d/b}$. From these results, we derive an application to the computation of the braid monodromy  for collections of univariate polynomials depending on a common set of parameters.
\end{abstract}

\footnote{{\bf Keywords:}  braid group, monodromy, fewnomial, tropical geometry} 
\footnote{{\bf MSC-2010 Classification:} 
20F36, 
55R80, 
14T05 
}

\section{Introduction}

\subsection{Braid monodromy of univariate polynomials}
Let $\cC_d\subset \C^{d+1}$ denote the space of non-singular, \l{monic} univariate polynomials of degree $d$. The Vi\`{e}te map $\V$ that associates to a  polynomial $p(x)\in \cC_d$ its unordered set of roots realises an isomorphism between $\cC_d$ and the \emph{configuration space} $C(\C,d)$ of $d$ distinct points in $\C$. In turn, the induced map $ \V_* : \pi_1(\cC_d) \rightarrow \pi_1\big(C(\C,d)\big)$ is an isomorphism onto the \emph{Artin braid group} $B_d:=\pi_1\big(C(\C,d)\big)$. Moreover, the higher homotopy groups of $\cC_d$ vanish so that $\cC_d$ is the Eilenberg-MacLane space $K(B_d,1)$, see \cite{FN}. For instance, this fact was succesfully used by V.I. Arnol'd in \cite{Arn71} to compute the cohomology groups of $B_d$.

From the perspective of fewnomial theory, it is natural to consider the related problem of determining the fundamental group (as well as higher homotopy groups) of the space $\cC_A\subset \C^{d+1}$ consisting of polynomials $p(x)=\sum_{a\in A} c_ax^a$ with a given set of exponents $A\subset \{0,\cdots,d\}$.
In particular, this study fits into the general program of determining the fundamental group of the complement to discriminant varieties, see \cite{DL}. The particular instance of the space $\cC_A$ was considered in \cite{Lib90} in relation to Smale-Vassiliev's complexity for algorithms. In the latter, A. Libgober asked for the computation of $\pi_1(\cC_A)$ and whether $\cC_A$ is a $K(G,1)$-space. He answered both questions in the case of trinomials, i.e. $\#A=3$. For general supports $A$, it seems that not much is known about $\pi_1(\cC_A)$. 

As a first approximation, we suggest to investigate the image of $\pi_1(\cC_A)$ under the map $\V_*$. This study generalises the determination of the Galois group of the universal polynomial with support $A$ (see for instance \cite{Co}) whose multivariate analogues were studied in \cite{E18} and \cite{AL}.

\subsection{Main results}
Since multiplying polynomials with monomials does not affect the set of roots in $\C^\star$,  there is no loss of generality in restricting to supports $A$ such that $\{0,d\}\subset A \subset \{0,\cdots,d\}$. We  define  the \emph{space of conditions}  $\cC_A \subset \C^A$ to be the space of polynomial $p(x)$ with $d$ distinct roots in $\C^\star$.
 Thus, the Vi\`{e}te map defined above restricts to a map 
$\V : \cC_A \rightarrow C(\C^\star,d)$ into the configuration space of $d$ points in $\C^\star$. We denote by 
$$\mu_A^\star:\pi_1(\cC_A) \rightarrow \pi_1\big(C(\C^\star,d)\big)$$ 
the induced map between fundamental groups and refer to it as the \emph{braid monodromy map} relative to $A$. The fundamental group of the configuration space $C(\C^\star,d)$ is the $d$-stranded \emph{surface braid group} on $\C^\star$ that we denote $B_d^\star$. We will also consider the map $\mu_A$ obtained by composition of the map $\mu_A^\star$ with the surjection $B_d^\star\rightarrow B_d$ induced by the inclusion $\C^\star \hookrightarrow \C$.

Following \cite{E18}, the support set $A \subset \N$ is said to be \emph{reduced} if the smallest sublattice of $\Z$ containing $A$ is $\Z$ itself. The set $A$ is said to be \emph{non-reduced} otherwise.

\begin{Theorem}\label{thm:reduced}
For any reduced set $A \subset \N$, the map $\mu_A^\star : \pi_1(\cC_A) \rightarrow B_d^\star$ is surjective. 
\end{Theorem}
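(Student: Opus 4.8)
The plan is to produce an explicit generating set of $B_d^\star$ inside the image of $\mu_A^\star$. Recall that the braid group of the annulus $C(\C^\star,d)$ is the Artin group of type $B_d$: it is generated by the half-twists $\sigma_1,\dots,\sigma_{d-1}$ exchanging neighbouring strands (which generate the copy of the disk braid group $B_d\subset B_d^\star$) together with one further generator $\tau=\sigma_0$ realised by carrying a single strand once around the puncture $0$. Thus it suffices to show that the image of $\mu_A^\star$ contains, up to conjugation, all of $\sigma_1,\dots,\sigma_{d-1}$ and one copy of $\tau$. Note that a naive approach via the Vi\`ete map itself cannot work: $\cC_A$ has dimension $\#A$ while $C(\C^\star,d)$ has dimension $d$, so $\V$ is far from surjective and one is forced to analyse vanishing cycles.

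First I would reduce the problem to a finite list of local monodromies. The space $\cC_A$ is the complement in $\C^A$ of the hypersurface $\{c_0\,c_d\,\Delta_A=0\}$, where $\Delta_A$ is the $A$-discriminant and the two boundary factors record a root escaping through $0$, resp. through $\infty$. Since the fundamental group of the complement of a hypersurface in affine space is generated by meridians around its irreducible components, the image of $\mu_A^\star$ is generated by the $\mu_A^\star$-images of these meridians. A Picard--Lefschetz computation identifies them: a meridian around a smooth point of $\{\Delta_A=0\}$, where exactly two roots collide, maps to a half-twist exchanging those two roots; a meridian around $\{c_0=0\}$ rotates the cluster of $j_-:=\min(A\setminus\{0\})$ roots coalescing at the origin, and a meridian around $\{c_d=0\}$ rotates the cluster of $j_+:=d-\max(A\setminus\{d\})$ roots escaping to infinity.

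To organise and realise these vanishing cycles I would pass to a phase-tropical degeneration: choosing the valuations of the coefficients so that the Newton segment $[0,d]$ is subdivided at every point of $A$, the $d$ roots split into clusters sitting at well-separated radii, the cluster over a gap $[a_{i-1},a_i]$ consisting of $g_i:=a_i-a_{i-1}$ roots arranged as the vertices of a regular $g_i$-gon. Braids are then read off from loops in the base of the degeneration: looping the coefficient ratio controlling a gap cyclically permutes the corresponding $g_i$-gon, while allowing two adjacent gaps to interact reproduces a trinomial sub-family whose braid monodromy, computed by Libgober, supplies half-twists linking the two clusters. The task reduces to the combinatorial one of showing that the braids obtained from all gaps and all adjacent pairs generate $B_d^\star$.

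The main obstacle, and the place where reducedness is indispensable, is this final generation statement, which splits into two assertions. The first is a connectivity statement: the half-twists produced by the adjacent-cluster (trinomial) monodromies must link all $d$ strands, so that they generate the whole disk braid group $B_d$ rather than a proper standard parabolic subgroup; I expect to prove this by showing that consecutive clusters are linked and that the internal discriminant of suitable sub-trinomials generates the braid group on each group of strands, these assembling into $B_d$ exactly as in the classical dense model $A=\{0,\dots,d\}$. The second is the extraction of the primitive twist $\tau$: the boundary meridian winds the whole innermost $g_1$-gon around the origin, so it is an honest single-strand loop only when $g_1=1$; in general it cyclically permutes $g_1$ strands while contributing total winding one. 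Modifying it by elements of the already-obtained $B_d$, one can cancel the permutation and concentrate the winding on a single strand precisely when the residual symmetry of the root configuration, a $\Z/\gcd(g_1,\dots,g_k)\Z=\Z/\gcd(A)\Z$ action, is trivial. Since $A$ is reduced exactly when $\gcd(A)=1$, a B\'ezout combination of the cluster rotations then yields $\tau$, completing the generation. This same $\Z/\gcd(A)\Z$ symmetry is what, in the non-reduced case, confines the image to the expected wreath product, in agreement with the companion result.
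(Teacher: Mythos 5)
Your overall scaffolding (meridian generation, tropical clustering of the roots, sub-trinomial families, plus the type-$B$ presentation of the annulus braid group) is sound and close in spirit to the paper, but there is a genuine gap at the step you call the ``first assertion,'' and it is exactly the step where the whole difficulty of the theorem lives. You claim that the monodromy of the trinomial sub-family attached to two adjacent gaps supplies half-twists linking the two clusters, and that these assemble into $B_d$ ``exactly as in the classical dense model.'' This fails whenever the two gap lengths $g_i$ and $g_{i+1}$ are not coprime: that trinomial sub-family is then \emph{non-reduced}, every polynomial in it is a polynomial in $x^{e}$ with $e=\gcd(g_i,g_{i+1})$, and (by the covering argument behind Corollary \ref{cor:nonreduced}) its monodromy image is the proper subgroup of braids invariant under rotation by $e^{2i\pi/e}$. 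Such a subgroup contains no single half-twist; it only contains products of $e$ commuting half-twists. Reducedness of $A$ does not make the adjacent-gap trinomials reduced: for $A=\{0,2,8,11\}$ the gaps are $2,6,3$, and both adjacent trinomials $\{0,2,8\}$ and $\{2,8,11\}\sim\{0,6,9\}$ are non-reduced, so your construction never produces even one honest half-twist and the ``dense model'' induction cannot start. What is missing is a mechanism for combining the multi-twists coming from \emph{different} non-reduced sub-trinomials into single half-twists; this is the technical heart of the paper, namely Proposition \ref{prop:euclide} (an Euclidean algorithm inside $B_d^\star$, proved via the relations of Lemmas \ref{lem:swap} and \ref{lem:compactsupport}), applied to the multi-twists $[b_{a,j}]$ obtained from the full-length trinomials $\{0,p,d\}\subset A$ with $a=\gcd(p,d)$, where reducedness enters as $\lcm\big(\{d/\gcd(p,d)\}\big)=d$.

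A consequence is that you have also misplaced where reducedness is indispensable. Your ``second assertion'' (extracting the loop around the puncture) is in fact nearly free and needs no B\'ezout argument: once all of $B_d$ is in the image, a single cluster rotation $\rho$ (the local $[\tau]$ on the $g_1$ innermost strands) already yields a single-strand loop around the origin via the relation $[r_1]=[\tau]\circ[b_{g_1-1}]\circ\cdots\circ[b_1]$ used in Lemma \ref{lem:generatorsbraidgroup}, regardless of $\gcd(g_1,\dots,g_k)$. It is the generation of the half-twists themselves (your first assertion) that requires reducedness and the combinatorial work. A minor further inaccuracy: Libgober computed $\pi_1(\mathcal{C}_A)$ for trinomials and the $K(G,1)$ property, not the image of the braid monodromy; computing that image even for reduced trinomials is the content of Section \ref{sec:trinomials} of the paper (Corollary \ref{cor:trinomial}), so it cannot be quoted as known input.
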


For a non-reduced support set $B$ with largest element $\delta$, the map $\mu_{B}^\star$ is not surjective. Indeed, consider the reduced support $A:=B/b$ with largest element $d:=\delta/b$, where $b:=\gcd(B)$. Since every polynomial $q(x)\in \cC_B$ can be written as $p(x^b)$ for a unique $p(x)\in \cC_A$, we have the isomorphisms $\cC_A \simeq \cC_B$ and $\pi_1(\cC_A) \simeq \pi_1(\cC_B)$. The covering $f: x\mapsto x^b$ from $\C^\star$ to itself induces the pullback $f^*:B_d^\star \rightarrow B_{\delta}^\star$ satisfying $\mu^\star_{B}(\gamma) = f^*\big(\mu^\star_{ A}(\gamma)\big)$ for any element $\gamma \in \pi_1(\cC_{B}) \simeq \pi_1(\cC_{A})$. Together with Theorem \ref{thm:reduced}, the latter property implies the following.

\begin{Corollary}\label{cor:nonreduced}
Let $B \subset \N$ be a non-reduced support and let $d$, $\delta$ and $f$ be defined as above. Then, the image of the map $\mu^\star_{B} : \pi_1(\cC_{B}) \rightarrow B_{\delta}^\star$ is the subgroup  $f^*(B_d^\star)\subset  B_{\delta}^\star$.
\end{Corollary}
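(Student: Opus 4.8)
The plan is to read off the image of $\mu^\star_{B}$ directly from the relation $\mu^\star_{B}(\gamma) = f^*\big(\mu^\star_{A}(\gamma)\big)$ recorded just above the statement, combined with the surjectivity supplied by Theorem \ref{thm:reduced}. First I would observe that, under the identification $\pi_1(\cC_{B}) \simeq \pi_1(\cC_{A})$, this relation exhibits $\mu^\star_{B}$ as the composition $f^* \circ \mu^\star_{A}$. Since passing to images is functorial, it then follows at once that $\im \mu^\star_{B} = f^*\big(\im \mu^\star_{A}\big)$: indeed, $\im \mu^\star_{B} = \{\, f^*\big(\mu^\star_{A}(\gamma)\big) : \gamma \in \pi_1(\cC_{B})\,\}$, and as $\gamma$ ranges over $\pi_1(\cC_{B}) \simeq \pi_1(\cC_{A})$ the inner terms $\mu^\star_{A}(\gamma)$ range over all of $\im \mu^\star_{A}$.

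Next, because $A = B/b$ is reduced by construction (its elements have gcd $1$), Theorem \ref{thm:reduced} applies to $A$ and yields $\im \mu^\star_{A} = B^\star_{d}$. Substituting this into the previous identity gives $\im \mu^\star_{B} = f^*\big(B^\star_{d}\big)$, which is exactly the assertion of the Corollary. Note that injectivity of $f^*$ is not needed: the claim only concerns the image, so the formal manipulation above suffices.

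The Corollary itself therefore presents no real difficulty, and all of the content sits in Theorem \ref{thm:reduced} together with the factorization $\mu^\star_{B} = f^* \circ \mu^\star_{A}$. If I had to establish that factorization from scratch, I would argue that writing $q(x) = p(x^b)$ identifies the root set of $q$ in $\C^\star$ with the preimage $f^{-1}(Z)$ of the root set $Z$ of $p$ under the $b$-fold covering $f : \C^\star \to \C^\star$. Tracing a loop $\gamma$ in $\cC_{B} \simeq \cC_{A}$, the isotopy of the $\delta$ roots of $q$ is then the lift along $f$ of the isotopy of the $d$ roots of $p$, and on $\pi_1$ of the configuration spaces this lifting is precisely the induced homomorphism $f^* : B^\star_{d} \to B^\star_{\delta}$. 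The only point demanding care — the main obstacle, such as it is — is checking that this braid lift is well defined as a group homomorphism on the surface braid group, which is guaranteed by the unique path-lifting property of the covering $f$; granting this, the Corollary is a purely formal consequence.
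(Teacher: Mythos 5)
Your proof is correct and follows essentially the same route as the paper: the paper derives the Corollary exactly from the factorization $\mu^\star_{B} = f^*\circ\mu^\star_{A}$ (established in the paragraph preceding the statement via $q(x)=p(x^b)$ and the covering $f$) combined with the surjectivity of $\mu^\star_{A}$ from Theorem \ref{thm:reduced}. Your remark that injectivity of $f^*$ is not needed for the image computation also matches the paper, which invokes injectivity only afterwards to identify $\im(\mu^\star_B)$ with $B_d^\star$ up to isomorphism.
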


The image of $\mu^\star_{B}$ is isomorphic to $B_d^\star$ since $f^*$ is injective. However, the image of $\mu_B$, that is the projection of $f^*(B_d^\star)\subset  B_{\delta}^\star$ to $B_\delta$, is isomorphic to the wreath product of $\Z/b\Z$ with $B_d$, see the end of Section \ref{sec:braid}. 

\begin{Remark}
$\mathbf{1.}$ Corollary \ref{cor:nonreduced} justifies the consideration of braids in $\C^\star$ rather than in $\C$. Indeed, the computation of $\im(\mu_B)$ does not follow from the surjectivity of $\mu_A$, unlike for the case of $\mu_B^\star$ and $\mu_A^\star$, because there is no natural map $B_d\to B_\delta$ similar to $B_d^\star\to B_\delta^\star$. This phenomenon falls under the principle that the monodromy of an enumerative problem $P_B$ that is a covering of another enumerative problem $P_A$ can not be computed from the monodromy of the problem $P_A$ a priori. In general, more information is required on $P_A$, as illustrated for instance in \cite{AL}.

$\mathbf{2.}$ This in particular proves the theorem from \cite{AL} that the Galois group of the indeterminate polynomial with the non-reduced support $B$ equals the wreath product of $\Z/b\Z$ with $\Sym_d$. It would be interesting to extend results of this paper to multivariate polynomials so that they cover the respective results of \cite{AL} on Galois groups of systems of polynomial equations of several variables.
\end{Remark}

Let us come back to the case of a reduced support $A:=\{a_0,\cdots,a_n\}$ with $a_0=0$ and $a_n=d$. The set $\cC_A$ is the complement to a Zariski-closed subset of $\C^{d+1}$ that consists of the $A$-discriminant 
(see \cite{GKZ}) together with the first and last coordinate hyperplanes $\{c_0$=$0\}$ and $\{c_n$=$0\}$ in $\C^{d+1}$. While the monodromy obtained from small loops around the latter coordinate hyperplanes is easy to identify,  there might be situations when one is interested in the contribution of the remaining component of $\C^{d+1}\setminus \cC_A$. To this regard, we prove the following.

\begin{Theorem}\label{thm:contractible}
For a reduced support $A \subset \N$, the composition of $\pi_1(\cC_A\cap\{c_0$=$c_n$=$1\})\rightarrow \pi_1(\cC_A)$ with $\mu_A$ is surjective. In particular, if $K\subset \pi_1(\cC_A)$ denotes the kernel of the map induced by the inclusion $\cC_A\hookrightarrow\C^A \setminus\{c_0c_n$=$0\}$, then we have that $\mu_A(K)=B_d$.
\end{Theorem}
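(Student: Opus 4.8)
The plan is to reduce both assertions to the single identity $\mu_A(K)=B_d$ and then to exhibit, inside $K$, one loop whose braid has writhe $\pm1$. First I would introduce the map $q\colon\cC_A\to(\C^\star)^2$, $p\mapsto(c_0,c_n)$, recording the constant and leading coefficients (both nonzero on $\cC_A$, since every $p\in\cC_A$ has $d$ nonzero roots, whence $c_0=p(0)\neq0$ and $\deg p=d$). Composing the inclusion $\cC_A\hookrightarrow\C^A\setminus\{c_0c_n$=$0\}$ with the homotopy equivalence $\C^A\setminus\{c_0c_n$=$0\}\simeq(\C^\star)^2$ recovers $q$, so on fundamental groups the inclusion-induced map is exactly $q_*$ and hence $K=\ker q_*$. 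Next I would use the torus action $(s,t)\cdot p(x)=s\,p(tx)$, which sends $c_a\mapsto st^ac_a$ and therefore acts on $(c_0,c_n)$ by $(s,t)\cdot(c_0,c_n)=(sc_0,st^dc_n)$ (using $a_0=0$, $a_n=d$); this action is transitive on $(\C^\star)^2$ with finite stabiliser at $(1,1)$, so $q$ is a locally trivial fiber bundle with fiber $X:=\cC_A\cap\{c_0$=$c_n$=$1\}$. The homotopy exact sequence of this bundle then gives $\im\!\big(\pi_1(X)\to\pi_1(\cC_A)\big)=\ker q_*=K$, so the two statements of the theorem coincide and it suffices to prove $\mu_A(K)=B_d$.

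For the group-theoretic reduction I would argue as follows. Since $K$ is a kernel it is normal in $\pi_1(\cC_A)$, and by Theorem \ref{thm:reduced} the map $\mu_A$ is onto $B_d$ (compose $\mu_A^\star$ with $B_d^\star\twoheadrightarrow B_d$); hence $\mu_A(K)$ is normal in $B_d$. Moreover $\pi_1(\cC_A)/K\cong\im(q_*)\subseteq\Z^2$ is abelian, so the induced surjection onto $B_d/\mu_A(K)$ shows that $B_d/\mu_A(K)$ is an abelian quotient of $B_d$. As $B_d^{\mathrm{ab}}\cong\Z$ via the writhe (exponent sum) homomorphism $w$, we get $\mu_A(K)\supseteq[B_d,B_d]$ and $B_d/\mu_A(K)\cong\Z/w(\mu_A(K))$. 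Consequently $\mu_A(K)=B_d$ as soon as $w(\mu_A(K))=\Z$, i.e. as soon as $K$ contains a single element whose braid has writhe $\pm1$.

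To produce such an element I would take a meridian of the $A$-discriminant $\nabla$. At a generic smooth point $p_0$ of $\nabla$ the polynomial has exactly one double root and $d-2$ further simple roots, all in $\C^\star$, and in particular $c_0(p_0),c_n(p_0)\neq0$. The local monodromy of the Vi\`ete map along a small meridian of $\nabla$ at $p_0$ is the half-twist exchanging the two colliding roots, a conjugate of a standard generator, whose writhe is $\pm1$. It remains to place this meridian inside $X$, i.e. on the slice $\{c_0$=$c_n$=$1\}$: because $\nabla$ is invariant under the torus action and that action is transitive on $(\C^\star)^2$, the restriction $q|_\nabla$ is a submersion at a generic point, which is precisely transversality of the slice to $\nabla$; hence a meridian of $\nabla\cap\{c_0$=$c_n$=$1\}$ taken within the slice is a loop in $X$, representing an element of $\im(\pi_1(X))\subseteq K$ and realising the same half-twist. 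This yields $w(\mu_A(K))\ni\pm1$ and closes the argument.

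The main obstacle I anticipate is this last step: one must check that the slice $\{c_0$=$c_n$=$1\}$ is transverse to $\nabla$ at a generic point, so that the meridian computed inside the slice is genuinely homotopic in $\C^A\setminus\nabla$ to an ambient meridian and carries the half-twist, and that a generic point of $\nabla$ is a single nodal degeneration with all roots in $\C^\star$. Both reduce to standard facts about the $A$-discriminant combined with the torus symmetry used above; everything else is formal once the fiber bundle $q$ and the surjectivity of $\mu_A$ from Theorem \ref{thm:reduced} are in place.
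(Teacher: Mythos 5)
Your proposal is correct in outline, but it takes a genuinely different route from the paper's. The paper proves Theorems \ref{thm:reduced} and \ref{thm:contractible} \emph{simultaneously and constructively}: for each trinomial $\{0,p,d\}\subset A$ it produces, via the tropical construction (Corollary \ref{cor:trinomial}), explicit loops inside the slice $\{c_0=c_n=1\}$ whose braids are the simple braids $[b_{a,j}]$ (pullbacks of Artin-type generators along $x\mapsto x^a$, $a=\gcd(p,d)$), and then combines these over all $p\in A$ by the braid-group Euclidean algorithm (Proposition \ref{prop:euclide}) to realise every $[b_j]$ by a loop in the slice; Lemma \ref{lem:generatorArtin} then yields the theorem, with no recourse to Theorem \ref{thm:reduced} as a statement. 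You instead take Theorem \ref{thm:reduced} as a black box and add three soft ingredients: (i) the torus action $(s,t)\cdot p(x)=s\,p(tx)$ makes $q=(c_0,c_n):\cC_A\to(\C^\star)^2$ equivariant over a transitive action, hence a fiber bundle with fiber $X=\cC_A\cap\{c_0=c_n=1\}$, so the homotopy exact sequence identifies $\im\big(\pi_1(X)\to\pi_1(\cC_A)\big)$ with $K$ and shows the two assertions of the theorem are equivalent (the paper only uses the easy inclusion); (ii) normality of $K$, surjectivity of $\mu_A$ and abelianity of $\pi_1(\cC_A)/K\subset\Z^2$ force $\mu_A(K)\supset[B_d,B_d]$, reducing everything to one element of writhe $\pm1$; (iii) a meridian of the $A$-discriminant taken inside the slice supplies it, transversality of the slice to the discriminant coming again from the torus action. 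This is shorter and more conceptual, but note what it buys and what it costs: it is non-constructive (no explicit braids in the image), it cannot replace the paper's machinery since Theorem \ref{thm:reduced} itself still rests on the tropical/Euclidean construction, and it imports standard but nontrivial facts from GKZ theory, namely that the $A$-discriminant meets $\{c_0c_n\neq0\}$ in a nonempty hypersurface whose generic point corresponds to a unique, nondegenerate double root. That genericity is genuinely load-bearing: if some component of the discriminant generically carried two double roots, your meridians would have even writhe and the argument would collapse; it holds here by characteristic-zero biduality (the incidence variety maps birationally to the discriminant), and your two flagged verifications (transversality, and the local monodromy being an honest half-twist, via versality of the family $c\mapsto(p_c(x_0),p_c'(x_0))$) do go through. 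Finally, a full write-up should dispose of the degenerate case $\#A=2$, i.e. $A=\{0,1\}$, where the discriminant is empty but $B_1$ is trivial, so the statement is vacuous.
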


Theorem \ref{thm:contractible} has useful application to collections of fewnomials depending on a common set of parameters. Such situations appear naturally in the computation of the monodromy of Severi varieties of toric surfaces, see \cite{L19}. Consider a finite collection $A_1,\cdots, A_m\subset \N$, $\, 0\in A_j$, of reduced supports and an affine linear map 
$$L=(L_1,\ldots,L_m): \C^k \rightarrow \C^{A_1}\times \cdots \times \C^{A_m}.$$
For each index $j$, we denote by $\cD_j \subset \C^k$ (repectively $\mathcal{H}_j\subset \C^k$) the pullback under $L_j$ of the $A_j$-discriminant (respectively of $\{c_{0}c_{n_j}$=$0\}\subset \C^{A_j}$, where $n_j$ is the index of the largest element $d_j$ in $A_j$).

\begin{Definition}\label{def:generic}
The affine linear map $L$ is said to be \emph{generic} with respect to the collection $A_1,\cdots, A_m$ if the following holds:

-- the $m$ subsets $\mathcal{D}_j\subset \C^k$ are reduced hypersurfaces,

-- the $m$ subsets $\mathcal{H}_j \subset \C^k$ have positive codimension,

-- no $\mathcal{D}_j$ has a common irreducible component with another $\mathcal{D}_i$ or with any $\mathcal{H}_i$.
\end{Definition}

Observe that $\mathcal{D}_j$ is always reduced, irreducible and not contained in any of the $\mathcal{H}_i$ whenever $k\geqslant 2$. Observe also that we do not impose any restriction on the relative position of the $\mathcal{H}_j$-s and that the $\mathcal{H}_j$-s are allowed to be non-reduced. 

If we denote $U:=L^{-1}(\cC_{A_1}\times \cdots \times \cC_{A_m})$, then we can define the map 
\[ \mu : \pi_1(U) \rightarrow B_{d_1}\times \cdots \times B_{d_m} \]
where the $j^{th}$ factor of $\mu$ is the composition of the map $L_j^*:\pi_1(U) \rightarrow \pi_1(\cC_{A_j})$ with the map $\mu_{A_j}$.

\begin{Corollary}\label{cor:system}
If the linear map $L$ is generic in the sense of Definition \ref{def:generic}, then $\mu$ is surjective.
\end{Corollary}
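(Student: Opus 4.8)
The plan is to reduce the surjectivity of the product map $\mu$ to repeated application of Theorem \ref{thm:contractible}, using the genericity hypotheses to decouple the factors. The target group $B_{d_1}\times\cdots\times B_{d_m}$ is a direct product, so it suffices to show two things: first, that for each fixed index $j$ the $j^{th}$ component $\mu_j:=\mu_{A_j}\circ L_j^*$ is surjective onto $B_{d_j}$; and second, that the images of the various components can be realised \emph{independently}, i.e. that $\im(\mu)$ surjects onto each factor via a loop that is trivial in all the other factors. The first point will come from Theorem \ref{thm:contractible}, while the second is where the genericity assumptions do their real work.

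First I would analyse a single factor. Fix $j$ and consider the composition $\pi_1(U)\xrightarrow{L_j^*}\pi_1(\cC_{A_j})\xrightarrow{\mu_{A_j}}B_{d_j}$. The key is to produce enough loops in $U$ that map, under $L_j^*$, into the subgroup $K_j\subset\pi_1(\cC_{A_j})$ appearing in Theorem \ref{thm:contractible} — namely the kernel of the map induced by $\cC_{A_j}\hookrightarrow\C^{A_j}\setminus\{c_0c_{n_j}\!=\!0\}$ — while remaining contractible in all the other factors $\C^{A_i}\setminus\{c_0c_{n_i}\!=\!0\}$, $i\ne j$. Concretely, the affine line $L^{-1}$ of a generic small loop $\gamma$ around the discriminant hypersurface $\cD_j$ gives such a loop: because $\cD_j$ shares no irreducible component with $\cD_i$ or $\HH_i$ for $i\ne j$ (third bullet of Definition \ref{def:generic}) and avoids the $\HH_i$, the pullback $L_i^*\gamma$ lands in the complement $\C^{A_i}\setminus\{c_0c_{n_i}\!=\!0\}$ and is nullhomotopic there for each $i\ne j$, hence maps to $1\in B_{d_i}$. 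Meanwhile $L_j^*\gamma$ lands in $K_j$, and by Theorem \ref{thm:contractible} the collection of all such loops generates all of $B_{d_j}$ under $\mu_{A_j}$. That $\cD_j$ is a genuine reduced hypersurface (first bullet) and that $\HH_j$ has positive codimension (second bullet) are exactly what guarantee such loops exist in $U$ and in the relevant complement.

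Assembling the factors, I would argue as follows: for each $j$, the subgroup $\im(\mu)\cap(\{1\}\times\cdots\times B_{d_j}\times\cdots\times\{1\})$ already contains a generating set of the $j^{th}$ factor, by the decoupling above. A standard lemma on subgroups of direct products then yields that a subgroup surjecting onto each factor and containing each factor's full image in the corresponding slot must be the whole product; here we have the stronger statement that the full $j^{th}$ factor sits inside $\im(\mu)$, so $\im(\mu)\supseteq\prod_j\{1\}\times B_{d_j}\times\{1\}=B_{d_1}\times\cdots\times B_{d_m}$, giving surjectivity.

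The main obstacle I expect is the decoupling step, that is, verifying rigorously that a loop around $\cD_j$ can be chosen to be simultaneously nontrivial for the $j^{th}$ problem and nullhomotopic for all the others. This requires a careful use of the hypothesis that $\cD_j$ has no common irreducible component with the $\cD_i$ and $\HH_i$ ($i\ne j$): one must ensure that a small meridian of $\cD_j$, pushed into $U$, does not inadvertently wind around any component of $\cD_i$ or $\HH_i$, which is precisely what the no-common-component and positive-codimension conditions prevent. Making this transversality/general-position argument precise — perhaps by choosing a generic complex line or curve in $\C^k$ meeting $\cD_j$ transversally at a smooth point away from all other strata, and invoking the local product structure of the discriminant complement there — is the crux; the rest is formal.
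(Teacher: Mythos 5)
Your plan misidentifies where the difficulty lies, and the step you treat as immediate is in fact the whole content of the corollary. You assert that surjectivity of the single factor $\mu_j=\mu_{A_j}\circ L_j^*$ ``will come from Theorem \ref{thm:contractible}'', and later that ``by Theorem \ref{thm:contractible} the collection of all such loops generates all of $B_{d_j}$ under $\mu_{A_j}$''. This is a non sequitur. Theorem \ref{thm:contractible} concerns the subgroup $K_j\subset\pi_1(\cC_{A_j})$ of the \emph{full} space of conditions: $K_j$ is the normal closure of meridians of the $A_j$-discriminant, where the conjugations run over all of $\pi_1(\cC_{A_j})$. The loops you construct live in $U$, so under $L_j^*$ you only obtain those meridians, and those conjugates, that are visible from the low-dimensional slice $L(\C^k)\subset\C^{A_1}\times\cdots\times\C^{A_m}$; they generate a subgroup of $K_j$ that can a priori be much smaller, and nothing in Theorem \ref{thm:contractible} says its image under $\mu_{A_j}$ is all of $B_{d_j}$. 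Promoting slice meridians to the full kernel is exactly a Zariski--Lefschetz hyperplane-section problem, and the genericity of Definition \ref{def:generic} is far too weak to invoke such a theorem directly: $L$ is not assumed transverse to the discriminants, and the $\HH_i$ may be nonempty and non-reduced. This is precisely what the paper's proof is organised around: it reduces to $k=1$, deforms the line through generic maps into the special position $L_j(\C)\subset\{c_0=c_{n_j}=1\}$ where every $\HH_j$ is empty, the line meets each discriminant transversally and the $\cD_j$ are pairwise disjoint; there \cite[Th\'{e}or\`{e}me]{Ch} gives surjectivity of $\pi_1(\C\setminus \cD_j)\rightarrow\pi_1(\cC_{A_j}\cap\{c_0=c_{n_j}=1\})$, which combines with the \emph{first} statement of Theorem \ref{thm:contractible} (not the statement about $K_j$) and the disjointness of the $\cD_j$ to give surjectivity of $\mu$ in that position; finally a monodromy-invariance argument along the deformation (the subgroups $G_t$ of loops supported in a moving disc $V_t$ containing all $\cD_{j,t}$ and avoiding all $\HH_{j,t}$) transfers the conclusion back to the original $L$. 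Your proposal contains no substitute for these steps.

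There is also a local error in the decoupling itself, which is otherwise the easy half of the argument. From ``$L_i^*\gamma$ \ldots is nullhomotopic in $\C^{A_i}\setminus\{c_0c_{n_i}=0\}$'' you conclude ``hence maps to $1\in B_{d_i}$''. But $\mu_{A_i}$ does not factor through $\pi_1(\C^{A_i}\setminus\{c_0c_{n_i}=0\})$; on the contrary, Theorem \ref{thm:contractible} says that the kernel of $\pi_1(\cC_{A_i})\rightarrow\pi_1(\C^{A_i}\setminus\{c_0c_{n_i}=0\})$ maps \emph{onto} $B_{d_i}$, so membership in that kernel gives no control at all on the braid. The correct statement is stronger and available with your hypotheses: a small meridian disc of $\cD_j$ centred at a generic point avoids $\cD_i\cup\HH_i$ for every $i\neq j$ by the no-common-component condition, so $L_i\circ\gamma$ bounds a disc inside $\cC_{A_i}$ itself and is therefore trivial already in $\pi_1(\cC_{A_i})$. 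That slip is fixable; the gap described above is not, short of importing the paper's Zariski-plus-deformation machinery or an equivalent.
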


\begin{proof}
Observe first that since $L$ is generic with respect to $A_1,\cdots,A_m$, there always exists an affine linear map $\iota:\C\rightarrow \C^k$ such that $\widetilde L:=L\circ\iota$ is generic. If we denote $\widetilde U:=\iota^{-1}(U)$ and $\tilde\mu:\pi_1(\widetilde U) \rightarrow B_{d_1}\times \cdots \times B_{d_m}$ the corresponding monodromy map, it suffices to show that $\tilde \mu$ is surjective since it factorises through $\mu$. In other words, we can assume that $k=1$.

Let us assume further that $L_j(\C)\subset \{c_{0}$=$c_{n_j}$=$1\}\subset \C^{A_j}$ for all $j$. In particular, all the $\mathcal{H}_j$-s are empty, the line $L_j(\C)$ intersects transversely the $A_j$-discriminant for any $j$ and the $\cD_j$-s are pairwise disjoint. By \cite[Th\'{e}or\`{e}me]{Ch}, the latter transversality property implies that $\pi_1(\C\setminus \cD_j)\rightarrow \pi_1(\cC_{A_j}\cap\{c_{0}$=$c_{n_j}$=$1\})$ is surjective. It follows from Theorem \ref{thm:contractible} that $\pi_1(\C\setminus \cD_j)\rightarrow B_{d_j}$ is also surjective. Since $U=\C \setminus\big(\cup_j \cD_j\big)$ and the fact that the $\cD_j$-s are pairwise disjoint, the natural map $\pi_1(U)\rightarrow\pi_1(\C\setminus \cD_1)\times \cdots \times \pi_1(\C\setminus \cD_m)$ is also surjective. Therefore, the map $\mu: \pi_1(U)\rightarrow B_{d_1}\times \cdots\times b_{d_m}$ is surjective too.

Let now $L:\C\rightarrow \C^{A_1}\times\cdots\times\C^{A_m}$ be any generic map. Then, there is a continuous one-parameter family $\{L_t\}_{0\leqslant t \leqslant 1}$ of such maps joining $L=L_0$ to a map $L_1$ as in the above paragraph. Let us put an index $t$ to all piece of notation corresponding to the map $L_t$. Then, there exists a continuous one-parameter family of discs $V_t\subset \C$ such that $V_t$ contains $\cup_j \cD_{j,t}$ and is disjoint from $\cup_j \mathcal{H}_{j,t}$. In particular, the disc $V_t$ determines the subgroup $G_t$ of  $\pi_1(U_t)$ of all loops in $U_t$ supported in $V_t$. Clearly, the group $\mu_t(G_t)$ is independent of $t$. Since $\cup_j \mathcal{H}_{j,1}$ is empty, we have that $G_1=\pi_1(U_1)$ and therefore that $\mu_t(G_t)=\mu_1(G_1)=\im(\mu_1)=B_{d_1}\times \cdots\times b_{d_m}$ by the previous paragraph.
\end{proof}
\medskip

\subsection{Techniques and perspectives}
Let us now comment on the proofs of  Theorems \ref{thm:reduced} and \ref{thm:contractible}. A possible approach is to use Zariski theorems such as \cite[Th\'{e}or\`{e}me]{Ch}. Indeed, the configuration space $\cC_A$ is an iteration of hyperplane sections of the space $\cC_d$. However, the latter hyperplane sections are not generic and the fundamental groups $\pi_1(\cC_A)$ and $\pi_1(\cC_d)$ are certainly not isomorphic in general. In particular, we cannot apply \cite[Th\'{e}or\`{e}me]{Ch}. At the current stage of our investigations, we are only interested in the surjectivity of  maps between fundamental groups induced by restrictions to hyperplane sections. Therefore, the statement  \cite[Theorem $2.5$]{Be} could be used, provided that its assumptions are satisfied in the present situation.

Instead, we use elementary considerations from tropical geometry to construct explicit elements in the image of $\mu_A^\star$ in the particular case $\#A=3$. We work out the general case by specialisation to the case of trinomials and by working out handy relations in the relevant braids groups, see Lemma \ref{lem:swap}. These relations lead to an analogue of the Euclidean algorithm which may be interesting in its own, see Proposition \ref{prop:euclide}.

In addition to providing explicit elements in the image of the braid monodromy, the tropical techniques developed in this paper have the advantage of generalising to families of univariate polynomials
\[ p(z,\underline{w})=c_0(\underline{w})+ c_1(\underline{w}) z^{a_1}+ \cdots + c_n(\underline{w}) z^{a_n} \]
whose coefficients are polynomials in $\underline{w}:=(w_1,\cdots,w_k)$ of arbitrary degrees, situation in which the Zariski theorems mentioned above do not apply. In particular, these techniques can be used to determine the Galois groups of  square systems 
\[ f_1=f_2=0\]
of bivariate polynomials supported on reducible tuples  $A_1,A_2 \subset \Z^2$, see \cite[Definition $1.3$]{E18}.

\subsection{Organisation of the paper} In Section \ref{sec:settings}, we recall standard facts about braid groups and the basics of tropical geometry that we use in Section \ref{sec:trinomials} to deal with the particular case of reduced trinomials. In Section \ref{sec:proofs}, we  use the case of trinomials and the aforementioned Euclidean algorithm to prove Theorems \ref{thm:reduced} and \ref{thm:contractible}.
\medskip

\textbf{Acknowledgements.} The authors are grateful to A. Libgober and M. L{\"o}nne for providing references on the subject and to S.Y. Orevkov for helpful discussion on braid groups.The authors are also grateful to an anonymous referee for valuable comments. The first author was supported by the Russian Science Foundation grant, project 16-11-10316.

\section{Settings}\label{sec:settings}

\subsection{Surface braid groups}\label{sec:braid}

Recall that $C(\C^\star,d)$ denotes the \emph{configuration space} of $d$ distinct points in $\C^\star$. Its fundamental group is denoted by $B_d^\star$ and referred to as the $d$-stranded \emph{surface braid group} on $\C^\star$. We refer to \cite[Ch.$9$]{farbmarg} for the theory on surface braid groups.

Define a \emph{$d$-stranded geometric braid} on $\C^\star$ to be a topological $1$-fold $b\subset S^1 \times \C^\star$ such that the projection $\pi: b\rightarrow S^1$ is a degree $d$ covering. The map $S^1\rightarrow C(\C^\star,d)$ defined by $\theta \mapsto \pi^{-1}(\theta)$ defines in turn an element in $B_d^\star$.
Two geometric braids define the same element in $B_d^\star$ if and only if there exists an isotopy of $d$-stranded geometric braids between them.

In order to perform computations in $B_d^\star$, we will represent braids by diagrams. To that purpose, observe that any braid $[b]\in B_d^\star$ can be represented by a geometric braid $b\subset S^1 \times \C^\star$ with projection $\pi: b\rightarrow S^1$ such that :

-- the set $\pi^{-1}(1)$ is evenly distributed on the circle $S^1 \subset \C^\star$,

-- the only singularities of the projection of $b$ under $\pi \times \arg$ are transverse self-intersection points (here, the function $\arg: \C^\star \rightarrow S^1$ is defined by $x\mapsto \frac{x}{\vert x \vert}$).

We represent geometric braids with the above properties using the projection $\rho$ given by the composition of $ \pi \times \arg$ with the projection of $S^1\times S^1$ to a square fundamental domain $[a, a+2\pi[ \times [0, 2\pi[$ for any choice $a \in \R$. When the projection of two strands of $b$ under $\rho$ cross each other, we picture the strand with the largest modulus in $\C^\star$ as passing underneath the other strand. Eventually, we use blue-shaded areas to represent a sequence of neighbouring strands that are parallel to each other. 

\begin{figure}[h]
\centering
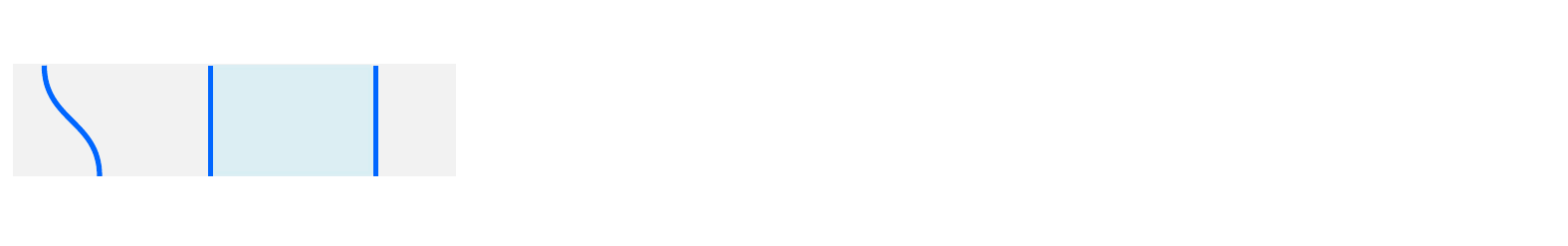
\caption{The elements $[b_1]$, $[\tau]$ and $[r_1]$ in $B_d^\star$.}
\label{fig:braidgenerators}
\end{figure}

Let us now introduce useful elements of the surface braid group $B_d^\star$. First, we define the elements $[b_1]$, $[\tau]$ and $[r_1]$ according to Figure \ref{fig:braidgenerators}. From these elements, we define
\[ [b_{j+1}] :=[\tau]^j \circ [b_1] \circ [\tau]^{-j} \; \text{ and } [r_{j+1}] :=[\tau]^j \circ [r_1] \circ [\tau]^{-j}\]
for $j \in \{1,\cdots,d-1\}$. We have the following classical result.

\begin{Lemma}\label{lem:generatorArtin}
The projection of the elements $[b_1], \cdots, [b_{d-1}] \in B_d^\star$ to $B_d$ generate $B_d$.
\end{Lemma}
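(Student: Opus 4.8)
The plan is to identify the projection of each $[b_j]$ to $B_d$ with a standard half-twist and then to invoke the classical fact that half-twists along a linear chain of arcs generate the braid group. First I would read off from Figure \ref{fig:braidgenerators} that the projection of $[b_1]$ to $B_d$ is the half-twist $T_{\gamma_1}$ exchanging the two consecutive base points $p_1,p_2$ along the short arc $\gamma_1$ joining them near the circle $S^1$; since this swap is supported in a small disc disjoint from the origin, filling in the puncture does not affect it. The element $[\tau]$ induces on the base points the cyclic shift $p_k\mapsto p_{k+1}$ and hence carries each short arc $\gamma_k$ to its neighbour $\gamma_{k+1}$. Applying the general identity $\phi\,T_\gamma\,\phi^{-1}=T_{\phi(\gamma)}$, which relates a half-twist to its conjugate by a mapping class $\phi$, with $\phi=[\tau]^j$, I would deduce that $[b_{j+1}]=[\tau]^j[b_1][\tau]^{-j}$ is the half-twist $T_{\gamma_{j+1}}$ along the arc $\gamma_{j+1}$ joining $p_{j+1}$ and $p_{j+2}$, and that the same description survives the projection to $B_d$.

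The arcs $\gamma_1,\dots,\gamma_{d-1}$ produced this way are pairwise disjoint apart from their shared endpoints and assemble into a single embedded path $p_1-p_2-\cdots-p_d$ running once along $S^1$; one stops before the arc closing up $p_d$ to $p_1$, which is precisely why only $[b_1],\dots,[b_{d-1}]$ occur. I would then invoke the standard fact (see \cite[Ch.$9$]{farbmarg}) that the half-twists along such a chain generate $B_d$: there is a homeomorphism of the plane taking this chain to a straight chain of arcs and the base points to collinear points, which conjugates $T_{\gamma_1},\dots,T_{\gamma_{d-1}}$ to the Artin generators $\sigma_1,\dots,\sigma_{d-1}$. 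As the $\sigma_i$ generate $B_d$, so do the projections of $[b_1],\dots,[b_{d-1}]$, which is the claim.

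The step requiring the most care is the identification in the first paragraph: one must check that conjugating by $[\tau]^j$ genuinely rotates the swap into a half-twist along the neighbouring arc, without accumulating any winding around the filled-in puncture. This is exactly where the cancellation between $[\tau]^j$ and $[\tau]^{-j}$ is used, and it is also the reason the statement is phrased for the projections to $B_d$ rather than for the surface braids themselves, where $[\tau]$ is a non-trivial element encircling the origin. Once this identification is secured, the conclusion is purely the classical chain-of-arcs generation statement.
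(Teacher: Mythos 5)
Your proposal is correct and is essentially the paper's argument: the paper's one-line proof simply identifies the projections of $[b_1],\cdots,[b_{d-1}]$ with the Artin generators $\sigma_i$ of \cite[Theorem 16]{Ar}, and your half-twist/chain-of-arcs discussion (conjugating by $[\tau]^j$ and then straightening the chain by a homeomorphism of the plane) is precisely the standard justification of that identification. No gap; you have just spelled out the details the paper leaves implicit.
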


\begin{proof}
The projection of these elements are the elements $\sigma_i$ used by E. Artin to give the presentation \cite[Theorem 16]{Ar} of $B_d$.
\end{proof}

A $d$-stranded braid is \emph{pure} if one (and then any) of its representative has $d$ connected components. Pure braids form the subgroup $PB_d^\star$ in $B_d^\star$. We order the connected components of a pure braid by ordering their starting points from left to right in the corresponding diagram (this ordering  depends on the choice of the fundamental domain $[a, a+2\pi[ \times [0, 2\pi[$). This ordering allows to define the map
\[
\begin{array}{rcl}
\ind \;  : \;  PB_d^\star &\rightarrow & \Z^d \\
\left[\beta\right] \quad & \mapsto & \frac{1}{2i\pi} \big( \int_{\beta_1}\frac{dx}{x}, \cdots,\int_{\beta_d}\frac{dx}{x} \big)
\end{array}
\]
where $\beta_j$ is the $j^{th}$ component of any representative of $[\beta]$.

\begin{Lemma}\label{lem:generatorsbraidgroup}
The surface braid group $B_d^\star$ is generated by any of the following sets:

-- the set $\{[\tau], [b_1], \cdots, [b_{d}], [r_1], \cdots,[r_d]\}$,

-- the set $\{[\tau], [b_1], [r_1]\}$,

-- the set $\{[\tau], [b_1]\}$.
\end{Lemma}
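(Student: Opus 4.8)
The plan is to prove only that the smallest set $\{[\tau],[b_1]\}$ generates $B_d^\star$; the other two statements then follow at once, since each lists a superset of $\{[\tau],[b_1]\}$ and hence generates a subgroup containing $\langle[\tau],[b_1]\rangle$. Write $G:=\langle[\tau],[b_1]\rangle$. The defining relations $[b_{j+1}]=[\tau]^j\circ[b_1]\circ[\tau]^{-j}$ and $[r_{j+1}]=[\tau]^j\circ[r_1]\circ[\tau]^{-j}$ show immediately that $G$ contains every band $[b_1],\dots,[b_d]$, and that $G$ contains every $[r_j]$ as soon as it contains $[r_1]$. So the whole lemma reduces to two tasks: exhibit a manageable generating set of $B_d^\star$ built from bands and a single winding element, and then place $[r_1]$ inside $G$.

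For the first task I would show that the bands $[b_1],\dots,[b_{d-1}]$ together with the single winding element $[r_1]$ generate $B_d^\star$. The natural tool is the Fadell--Neuwirth fibration $C(\C^\star,d)\to C(\C^\star,d-1)$ forgetting the last point, whose fibre is $\C^\star$ minus $d-1$ points. This yields a short exact sequence $1\to F\to B_d^\star\to B_{d-1}^\star\to 1$ with $F$ free of rank $d$, generated by the loop $[r_d]$ of the last strand around the origin together with its loops around the other $d-1$ strands, the latter being expressible through the bands. Arguing by induction on $d$, the quotient $B_{d-1}^\star$ is generated by the lower-index bands and $[r_1]$, while $F$ is generated by $[r_d]$ and band products; since $[r_d]$ is carried to $[r_1]$ by a product of bands (a conjugation visible through the index map $\ind\colon PB_d^\star\to\Z^d$), the set $\{[b_1],\dots,[b_{d-1}],[r_1]\}$ suffices. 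One may equivalently invoke the standard generating set of the annular braid group. Combined with the first paragraph, it remains only to verify $[r_1]\in G$.

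The heart of the matter, and the step I expect to be the main obstacle, is the annular relation
\[ [r_1]=[\tau]\circ\bigl([b_1]\circ[b_2]\circ\cdots\circ[b_{d-1}]\bigr)^{-1} \]
in $B_d^\star$, after which $[r_1]\in G$ is immediate because all the bands lie in $G$. I would verify this by an explicit isotopy of braid diagrams in the fundamental domain $[a,a+2\pi[\,\times\,[0,2\pi[$: the rotation $[\tau]$ moves the evenly spaced configuration forward by one angular slot, and post-composing with the inverse slide $[b_1]\circ\cdots\circ[b_{d-1}]$ returns all but one strand to their seats, leaving a single strand that has travelled once around the origin, that is, exactly $[r_1]$. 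As consistency checks that pin down the correct exponents and signs, both sides carry the same underlying permutation (a $d$-cycle), have the same total winding around the origin, and project to the same element of $B_d$ under $\C^\star\hookrightarrow\C$, where $[\tau]$ maps to $\sigma_1\cdots\sigma_{d-1}$ and $[r_1]$ maps to the identity.

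The only subtlety to watch is the matching of orientation conventions, so that the correct power and winding sign appear: the relation may instead read $[r_d]=[\tau]\circ\bigl([b_1]\circ\cdots\circ[b_{d-1}]\bigr)^{-1}$ or its inverse. This does not affect the argument, since $[r_1]$ and $[r_d]$ are conjugate inside $G$ through bands and powers of $[\tau]$. Once the displayed identity is secured, $G=\langle[\tau],[b_1]\rangle$ contains $[b_1],\dots,[b_{d-1}]$ and $[r_1]$, hence equals $B_d^\star$ by the first task, and all three generating statements follow.
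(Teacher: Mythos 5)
Your proposal is correct in its overall structure, but it reaches the conclusion by a genuinely different route than the paper. Both arguments rest on the same two ingredients: the conjugation relations $[b_{j+1}]=[\tau]^j\circ[b_1]\circ[\tau]^{-j}$, $[r_{j+1}]=[\tau]^j\circ[r_1]\circ[\tau]^{-j}$, and a diagrammatic relation expressing $[r_1]$ through $[\tau]$ and the bands (the paper checks $[r_1]=[\tau]\circ[b_{d-1}]\circ\cdots\circ[b_1]$ on Figure~\ref{fig:braidgenerators}; your version has the band product inverted, and your remark that this orientation ambiguity is immaterial is right, since either identity places $[r_1]$, hence every $[r_j]$, inside $\langle[\tau],[b_1]\rangle$). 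Where you diverge is the core step, namely that the largest set actually generates $B_d^\star$. The paper reduces an arbitrary braid to a pure braid with vanishing index using the $[b_j]$-s and $[r_j]$-s, then unwraps it through the covering $x\mapsto x^m$ so that its diagram fits in a single fundamental domain, and invokes Artin's theorem (Lemma~\ref{lem:generatorArtin}) in the larger group $B_{md}^\star$ before pushing back down. You instead run an induction on $d$ via the Fadell--Neuwirth fibration $C(\C^\star,d)\to C(\C^\star,d-1)$, whose fibre group is free of rank $d$ on loops expressible through bands together with $[r_d]$; equivalently you invoke the standard generating set of the annular (type~B) braid group. Your route is standard and yields a stronger intermediate statement (that $\{[b_1],\cdots,[b_{d-1}],[r_1]\}$ alone generates), whereas the paper's covering trick is self-contained and avoids both the exactness discussion for the fibration sequence and any appeal to the surface-braid-group literature.

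Two points in your write-up need repair, though neither is fatal. First, your consistency check is internally inconsistent: $[r_1]$ is a \emph{pure} braid (the paper uses the $[r_j]$-s precisely to adjust the index map $\ind$ on $PB_d^\star$), so its underlying permutation is trivial, not a $d$-cycle; what actually happens in the key relation is that the $d$-cycle of $[\tau]$ cancels against that of the band product, which is forced by your own (correct) observation that $[r_1]$ projects to the identity in $B_d$. Second, in the inductive step the assertion that $[r_d]$ is carried to $[r_1]$ by a conjugation with bands cannot be certified by the index map alone, since $\ind$ only gives a necessary condition; this claim needs its own diagram verification (comparable to the one the paper performs for its relation), or it can be sidestepped: once the key relation is established, $[\tau]\in\langle[b_1],\cdots,[b_{d-1}],[r_1]\rangle$, and then $[r_d]=[\tau]^{d-1}\circ[r_1]\circ[\tau]^{1-d}$ lies in that subgroup by definition.
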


\begin{proof}
By definition, the first set is generated by the second. The relation $[r_1]=[\tau]\circ [b_{d-1}]\circ \cdots \circ [b_1]$, that is easily check using the diagrams of Figure \ref{fig:braidgenerators}, implies that the third set generates the second one. It remains to show that the first set is a generating set. 

To see this, observe first that for any element $[b] \in B_d^\star$, there is an element $[b']$ in the group generated by $[b_1], \cdots, [b_{d-1}]$ such that $[b']\circ[b]$ is a pure braid. Thus, there is no loss of generality in assuming that $[b]$ is pure. Now, for any pure braid $[b]$, there is an element $[b']$ in the group generated by $ [r_1], \cdots, [r_{d}]$ such that $\ind([b']\circ[b])=(0,\cdots,0)$.Thus, there is no loss of generality in assuming that $[b]$ is pure and such that $\ind([b])=(0,\cdots,0)$. We now want to argue that this is possible to unravel such a braid with the only elements $ [b_1], \cdots, [b_{d}]$. To see this, observe that there exists an integer $m$ such that every string in the diagram of the pullback $[b']$ of the braid $[b]$ by the covering $x\mapsto x^m$ fits inside a single fundamental domain. This follows from the fact that $\ind([b])=(0,\cdots,0)$. We know from Lemma \ref{lem:generatorArtin} that the braids $[b_1], \cdots, [b_{md-1}]$ suffice to unravel the braid $[b']\in B_{md}^\star$, that is we can write $[b']= [b_{j'_1}]\circ \cdots  [b_{j'_n}]$. Clearly, we can write $[b]= [b_{j_1}]\circ \cdots  [b_{j_n}]$ where $j_i\in \{1,\cdots,d\}$ is the reduction of $j'_i$ modulo $d$.
\end{proof}

In the course of the paper, we will construct braids by means of tropical geometry. Unfortunately, the objects we will obtain are not as nice as geometric braids. However, we argue below that these objects lead to well defined elements in  $B_d^\star$ in a rather canonical way.

\begin{Definition}\label{def:coarsebraid}
A $d$-stranded \emph{coarse braid} in $\C^\star$ is a topological 1-fold $b'\subset S^1 \times \C^\star$ with associated projection $\pi': b' \rightarrow S^1$ such that $\pi'$ is a degree $d$ covering outside of a finite set $T\subset S^1$ and such that the preimage by $\pi'$ of any connected arc in $S^1$ with endpoints $\alpha, \beta \in S^1\setminus T$ consists of $d$ connected arcs in $b'$ connecting $\pi'^{-1}(\alpha)$ to $\pi'^{-1}(\beta)$.
\end{Definition}

\begin{Lemma}\label{lem:braid}
For any coarse braid $b'\subset S^1 \times \C^\star$, there exists a family of geometric braids $b_\varepsilon$ continuous in $\varepsilon>0$ whose limit in the Hausdorff distance is $b'$. Moreover, the class $[b_\varepsilon] \in B_d^\star$ depends neither on $\varepsilon$ nor on the choice of the family $b_\varepsilon$.
\end{Lemma}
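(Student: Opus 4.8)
The plan is to regard a coarse braid as a loop into the symmetric product that is forced onto the discriminant only over the finite set $T$, to separate the colliding strands by a transversality argument, and then to pin down the resulting class by a local analysis at each collision. A $d$-stranded geometric braid in $\C^\star$ is the same datum as a loop $\Phi\colon S^1\to C(\C^\star,d)$, with class $[\Phi]\in\pi_1\big(C(\C^\star,d)\big)=B_d^\star$. A coarse braid $b'$ (Definition \ref{def:coarsebraid}) determines a continuous map $\Phi'\colon S^1\to\Sym_d(\C^\star)$, $\theta\mapsto(\pi')^{-1}(\theta)$ viewed as a multiset of $d$ points; by the arc condition it takes values in the open stratum $C(\C^\star,d)=\Sym_d(\C^\star)\setminus\Delta$ away from $T$ and meets the big diagonal $\Delta$ exactly over $T$. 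Since $\C^\star$ is a complex curve, $\Sym_d(\C^\star)$ is a complex $d$-fold and $\Delta$ has real codimension two.

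\textbf{Existence.} Because the source $S^1$ is one-dimensional while $\Delta$ has codimension two, I can push $\Phi'$ off $\Delta$ by an arbitrarily $C^0$-small perturbation supported in disjoint arcs $I_i$ around the points $t_i\in T$ and equal to $\Phi'$ elsewhere; concretely, inside each $I_i$ I separate the finitely many colliding strands within $\C^\star$. Rescaling this perturbation by a parameter $\varepsilon>0$ and interpolating linearly yields a family $\Phi_\varepsilon$, continuous in $\varepsilon$, of genuine loops in $C(\C^\star,d)$ whose underlying geometric braids $b_\varepsilon$ converge to $b'$ in Hausdorff distance. A further generic perturbation, if one wishes, puts each diagram in the normal form of Section \ref{sec:braid}, with only transverse double points.

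\textbf{Well-definedness.} For a fixed family the $b_\varepsilon$ form a continuous path of loops in $C(\C^\star,d)$, that is an isotopy, so $[b_\varepsilon]\in B_d^\star$ does not depend on $\varepsilon$. For two families $b_\varepsilon$ and $\tilde b_\varepsilon$, away from $T$ both loops are $C^0$-close to $\Phi'$, and there I join them strand by strand by linear interpolation, which stays in $C(\C^\star,d)$ because the strands remain uniformly separated. The whole question thus reduces to a local one at each $t_i$: any two sufficiently small resolutions of the colliding strands, agreeing with the (distinct) configurations over $\partial I_i$, must represent the same element of the braid group of a small disc neighbourhood of $t_i$.

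\textbf{The main obstacle.} This local uniqueness is the heart of the matter and is \emph{not} a soft dimension count: a generic two-parameter homotopy already meets the codimension-two diagonal $\Delta$ in isolated points, and a priori two resolutions of a single collision could differ by a generator $\sigma_i^{\pm1}$. What rescues the statement is the over/under convention of Section \ref{sec:braid}, under which crossings are governed by the modulus on $\C^\star$: away from the collision point the two strands of $b'$ carry a well-defined modulus order and wind about one another in a definite rotational sense. I would check that this winding sign is an isotopy invariant of the collision, that every sufficiently small resolution realises precisely the crossing it prescribes, and hence that the two local resolutions coincide. Splicing these local isotopies to the linear interpolation away from $T$ then produces an isotopy between $b_\varepsilon$ and $\tilde b_\varepsilon$, proving that $[b_\varepsilon]$ is independent of all choices.
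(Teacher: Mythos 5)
Your proposal rests on a misreading of Definition \ref{def:coarsebraid}, and this derails both halves of the argument. A coarse braid is by definition an embedded topological $1$-fold in $S^1\times\C^\star$; together with the condition that the preimage of any arc with endpoints off $T$ consists of $d$ disjoint arcs joining the two fibers, this \emph{forbids} strands from ever meeting. What degenerates over $T$ is not injectivity of the configuration but the projection $\pi'$ itself: it may fail to be a covering because strands can contain vertical arcs, i.e.\ the fiber $(\pi')^{-1}(\theta)$ over $\theta\in T$ need not be finite. This is exactly what happens in the paper's applications (in the proof of Proposition \ref{prop:trinomial}, the fibers over $c^{(\times)}$ and $c^{(+)}$ are $d$ arcs, not $d$ points, because the phase-tropical line has two-dimensional pieces). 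Consequently a coarse braid does not even define a map $S^1\to\Sym_d(\C^\star)$, so your translation into the symmetric product, and the codimension-two transversality argument built on it, cannot get started. Conversely, the objects your framework does describe --- loops in $\Sym_d(\C^\star)$ touching the big diagonal over $T$ --- are not coarse braids, and for them the lemma is simply false: two strands that collide at a point can be resolved by pushing one strand to either side, producing braids that differ by a generator, while both resolutions converge in Hausdorff distance to the same limit. Your attempted rescue via a ``winding sign'' cannot work, because at an honest collision there is no residual winding data; the over/under rule of Section \ref{sec:braid} is only a convention for drawing planar diagrams of genuine braids in $\C^\star$ and places no constraint on how a collision is resolved. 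In short, the ``main obstacle'' you identify is real in your setting but insoluble there, and it is precisely what Definition \ref{def:coarsebraid} is designed to exclude.

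The paper's proof exploits the actual structure: over a small interval around each $\theta\in T$, the coarse braid consists of $d$ \emph{disjoint} compact arcs with endpoints on the nearby regular fibers, some of which fail to be transverse to $\pi'$. One deforms each such arc rel endpoints, continuously in $\varepsilon$, so as to make it transverse to the projection; gluing with $b'$ outside these intervals gives the family $b_\varepsilon$. Well-definedness is then soft, with no local crossing ambiguity to analyze: since the $d$ arcs are disjoint, they lie at positive distance from one another, so any two sufficiently small transverse perturbations rel endpoints remain in disjoint tubes around the arcs and are therefore isotopic through geometric braids. If you rewrite your proof replacing ``colliding strands'' by ``non-transverse arcs'' and drop the symmetric-product formalism, the existence part of your argument becomes essentially the paper's, and the uniqueness part collapses to this elementary disjointness observation.
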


\begin{Definition}\label{def:classcoarsebraid}
For any $d$-stranded coarse braid $b'\subset S^1 \times \C^\star$, we define \emph{the class of the coarse braid} $b'$ to be the element $[b_\varepsilon] \in B_d^\star$ for any family $b_\varepsilon$ as in Lemma \ref{lem:braid}. We denote this class by $[b']\in B_d^\star$.
\end{Definition}

\begin{proof}[Proof of Lemma \ref{lem:braid}]
Let us first show the existence of a family $b_\varepsilon$ as in the statement. Let us fix $\delta>0$ arbitrarily small. We prescribe that $b_\varepsilon$ is constant and  coincides with $b'$ on the preimage by $\pi'$ of the complement of the $\delta$-neighbourhood of $T\subset S^1$. Now, for any $\theta\in T$, the preimage  $(\pi')^{-1}\big( ]\theta-\delta, \theta+\delta[\big)$ consists of $d$ disjoints connected arcs, some of which  are not transverse to the fiber $(\pi')^{-1}(\theta)$.  Obviously, there is exists a  deformation of each such arc satisfying the following:

-- the deformation is continuous in the parameter $\varepsilon$;

-- the arcs have fixed endpoints; 

-- the arcs are transverse to the projection $\pi'$ for any $\varepsilon>0$.\\
The latter deformation provides us the sought family $b_\varepsilon$.

Finally, if $\textbf{b}_\varepsilon$ is another family as in the statement, 
it is clear that $b_\varepsilon$ is 
homotopic to  $\textbf{b}_\varepsilon$ along a family of geometric braids for small $\varepsilon$. In particular, the class $[b_\varepsilon]$ does not depend on the choice of the family $b_\varepsilon$ and obviously, it does not depend on $\varepsilon$ either.
\end{proof}

We conclude this section with the description of the isomorphism between the projection of $f^*(B^\star_d)$ in $B_d$ and the wreath product of $\Z/b\Z$ with $B_\delta$ in the context of Corollary \ref{cor:nonreduced}. Recall briefly the definition of the wreath product $\Z/b\Z \wr B_d$. The group $B_d$ acts on $\{1,\cdots,d\}$ by permutation. The latter action extends naturally to the group $K:=\{1,\cdots,d\}^{\Z/b\Z}$. The group $\Z/b\Z \wr B_d$ is defined as the semidirect product $K \rtimes B_d$ with respect to the latter action. In other words, elements in $\Z/b\Z \wr B_d$ are braids in $B_d$ each strand of which is decorated with an element in $\Z/b\Z$. The multiplication of two such elements is the one of $B_d$ at the level of braids and to a strand of the product we associate the sum of the elements in $\Z/b\Z$ decorating the two concatenated strands.

Recall now that $\delta=bd$ and that the elements of $f^*(B^\star_d)\subset B_\delta^\star$ are the braids globally invariant under multiplication by the $b^{th}$ roots of unity. There is not loss of generality in assuming that the braids of $B^\star_d$ are based at $\{1,\cdots,d\}$ and therefore that the braids of $f^*(B^\star_d)$ are based at the $b^{th}$ roots of $1,\cdots,d$. In particular, the $\delta$ strands of a braid $[b]\in f^*(B^\star_d)$ are divided into $d$ tuples of $b$ strands, namely the strands based at a $b^{th}$ root of $j$ for any $j\in \{1,\cdots,d\}$. If the strand based at $j\in \C$ ends at $j'e^{2i\pi k/b}$, then the strand based at $je^{2i\pi \ell/b}$ ends at $j'e^{2i\pi (\ell+k)/b}$, since each tuple is globally invariant under multiplication by $e^{2i\pi/b}$. Therefore,  each of the $d$ tuples defines an element in $k\in \Z/b\Z$. If $[b]=f^*([b'])$ then, each of the $d$ tuples of $[b]$ contains exactly one strand of $[b']$. Therefore, we can define the map $\varphi:f^*(B^\star_d)\rightarrow \Z/b\Z \wr B_d$ sending $[b]$ to the projection of $[b']$ in $B_d$ such that each strand $\beta$ of the latter projection is decorated with the element of $\Z/b\Z$ defined by the $b$-tuple of strands in $[b]$ containing $\beta$.

The map $\varphi$ is obviously surjective. It remains to show that the kernel of $\varphi$ coincide with the kernel of the projection $\pi:f^*(B^\star_d)\rightarrow B_\delta$. It is an easy exercise to check that these kernels in $f^*(B^\star_d)\simeq B^\star_d$ both coincide with $\left\langle r_1^b, \cdots, r_d^b\right\rangle\subset B^\star_d$.

\subsection{The phase-tropical line}\label{sec:pthyper}
Let $z$ and  $w$ be the coordinates functions on $\C^2$ and denote by $\cH$ the line
\[ \cH:=\left\lbrace (z,w)\in \C^2 \, \big| \, 1+z+w = 0\right\rbrace.\] 
The \emph{phase-tropical line} $L\subset (\C^\star)^2$ is the Hausdorff limit of the sets $H_t\big(\cH \cap (\C^\star)^2\big)$ where
\[
\begin{array}{rcl}
H_t \quad  : \quad (\C^\star)^2 & \rightarrow & (\C^\star)^2\\
(z,w) &\mapsto& \Big( \frac{z}{\vert z\vert } \vert z\vert^{\frac{1}{\log(t)}},
\frac{w}{\vert w\vert} \vert w\vert^{\frac{1}{\log(t)}}\Big) 
\end{array}
\]
as $t$ goes to $+\infty$. The set $L$ turns out to be homeomorphic to $\cH\simeq \cp{1} \setminus \{0,1,\infty\}$ and can be described through its projections under the maps\\
\begin{minipage}{6,5cm}
\vspace{0,2cm}
\[
\begin{array}{rcl}
\Arg : (\C^\star)^2 &\rightarrow& (S^1)^2\\
(z,w) & \mapsto & \big( \frac{z}{\vert z \vert}, \frac{w}{\vert w \vert} \big)
\end{array}
\]
\smallskip
\end{minipage}
and
\begin{minipage}{7,5cm}
\vspace{0,2cm}
\[
\begin{array}{rcl}
\Log : (\C^\star)^2 &\rightarrow& \R^2 \\
(z,w) & \mapsto & \big( \log(\vert z\vert ), \log(\vert w \vert) \big)
\end{array} .
\]
\smallskip
\end{minipage}

The set $\Log(L)$ is the union of the three rays $r_z:=(-1,0)\cdot\R_{\geqslant 0}$, $r_w:=(0,-1)\cdot\R_{\geqslant 0}$ and $r_\infty:=(1,1)\cdot\R_{\geqslant 0}$ merging at the origin $0\in \R^2$. For any point $p$ in the relative interior of $r_z$ (respectively $r_w$, respectively $r_\infty$), the set $\Arg\big(L\cap \Log^{-1}(p)\big) \subset (S^1)^2$ is the geodesic with the same slope as $r_z$ (respectively $r_w$, respectively $r_\infty$) passing through the point $(-1,-1)$ (respectively $(-1,-1)$, respectively $(-1,1)$). Eventually, the set $\Arg\big(L\cap \Log^{-1}(0)\big) \subset (S^1)^2$ is the closure of $\Arg(\cH)$. The latter set is the union of the two blue triangles bounded by the three geodesics $g_z$, $g_w$ and $g_\infty$ pictured in Figure \ref{fig:ptline} (left). In the same figure, we illustrate the description of $L$ in terms of the projections $\Log$ and $\Arg$.

\begin{figure}[h]
\centering
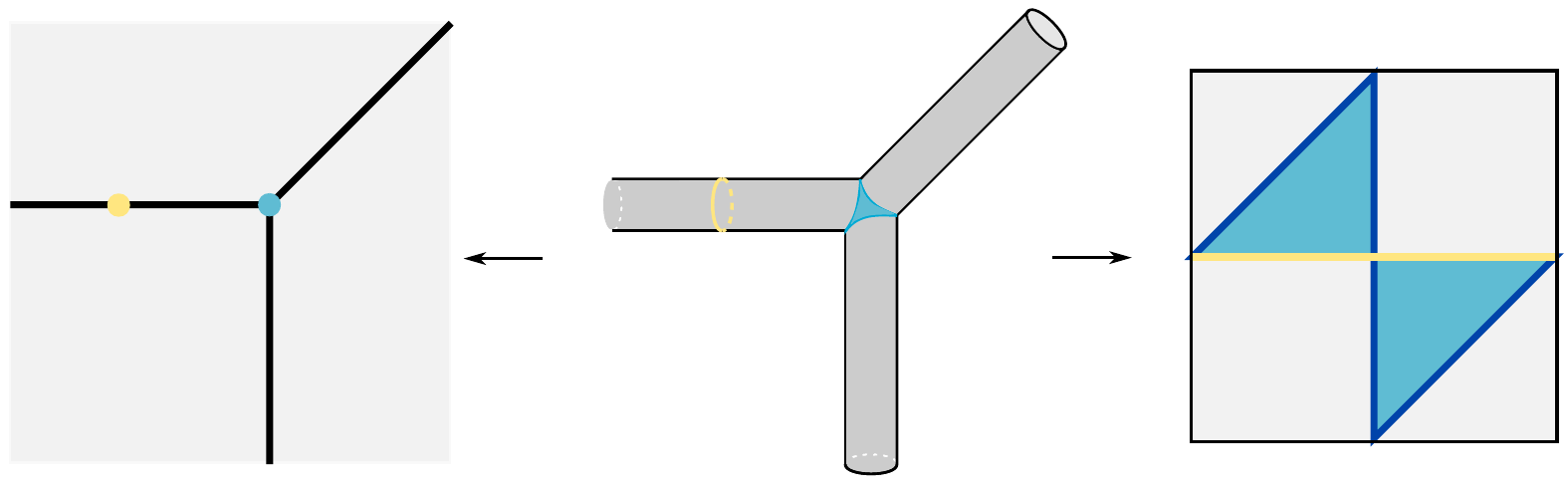
\caption{The phase-tropical line $L$ (in the middle) together with its projections under $\Log$ and $\Arg$. The lifts of the two coloured points in $\Log(L)$ are pictured  with the same colors in $L$ as well as their projection under $\Arg$. In particular, the set $\overline{\Arg(\cH)}$ is the union of the two blue triangles in $(S^1)^2$. Observe that we chose $(-1,-1)$ as the center of the fundamental domain for $(S^1)^2$.}
\label{fig:ptline}
\end{figure}

It will also be convenient to observe that the phase-tropical line $L$ is piecewise algebraic. Indeed, the restriction of $L$ to $\Log^{-1}( r_z \setminus 0)$ (respectively $\Log^{-1}( r_w \setminus 0)$, respectively $\Log^{-1}( r_\infty \setminus 0)$) is the algebraic curve $z=1$ (respectively $w=1$, respectively $z=-w$).

\section{Trinomials}\label{sec:trinomials}

Throughout this section, the support $A$ will be of the form $A:=\{0, p, d\}$ where $p$ and $d$ are coprime. 

\subsection{Tropicalisation}

In order to determine the image of $\mu^\star_A$, we will construct braids using tropical methods. To do so, we first reformulate our problem in a way that is suitable for tropicalisation.
 
In this section, we restrict to trinomials of the form $p(x)=1+c_1x^{p}+c_2x^{d}$ in $\cC_A$, where $c_0$, $c_1$, $c_2$ are the coordinates on $\C^A$. We denote simply by  $c:=(c_1,c_2)\in\C^2$ the corresponding coefficient vector and by $\cS_c:= \left\lbrace x\in \C^\star \, \big| \, p(x)=0 \right\rbrace$ the corresponding \emph{set of solutions}. The  coefficient vector $c$ also defines a map
\begin{equation}
\begin{array}{rcl}
\phi_c : \C^\ast & \rightarrow & \C^2\\
x &\mapsto & (c_1x^{p},c_2x ^{d})
\end{array}
\end{equation}
relative to $A$.
Thus, the set $\cS_c$
equals $\phi_c^{-1}(\cH)$ where $\cH$ is the line defined in Section  \ref{sec:pthyper}. Recall from the same section that $L\subset (\C^\star)^2$ denotes the phase-tropical line. By analogy, we define the \emph{set of tropical solutions} $\st_c:=\phi_c^{-1}(L) \subset \C^\star$ and the \emph{space of tropical conditions} $\ct_A \subset \C^2$ to be the set of vectors $c\in \C^2$ such that $\st_c$ consists of $d$ connected components. The following observation will be useful.

\begin{Lemma}\label{lem:tropdiscr}
A vector $c\in \C^2$ is in the complement of $\ct_A$ if and only if the following conditions are satisfied :

-- the line $\Log\big(\phi_c(\C^\star)\big)$ passes through the origin $0 \in \R^2$; 

-- the geodesic $\Arg\big(\phi_c(\C^\star)\big)$ passes through the point $(-1,1) \in (S^1)^2$.\\
Concisely, the set $\C^2 \setminus \ct_A$ is the algebraic curve $(-c_1)^d=(c_2)^p$.
\end{Lemma}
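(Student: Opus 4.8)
The plan is to compute the number of connected components of $\st_c$ directly, exploiting that $\st_c=\phi_c^{-1}(L)$ and that $\phi_c$ is injective. Injectivity is immediate from $\gcd(p,d)=1$: if $\phi_c(x)=\phi_c(y)$ then $(x/y)^p=(x/y)^d=1$, so $x=y$; hence the number of components of $\st_c$ equals that of $\phi_c(\C^\star)\cap L$. First I would record the two projections of the monomial curve $\phi_c(\C^\star)$: writing $x=e^{s+i\theta}$, the map $\Log\circ\phi_c$ sends $s\mapsto(\log|c_1|,\log|c_2|)+s(p,d)$, a line $\ell_c$ of direction $(p,d)$ through $(\log|c_1|,\log|c_2|)$, while $\Arg\circ\phi_c$ sends $\theta\mapsto(\arg c_1+p\theta,\arg c_2+d\theta)$, a closed geodesic $g_c\subset(S^1)^2$ of direction $(p,d)$ through $(\arg c_1,\arg c_2)$. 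A short computation then identifies the two bulleted conditions: ``$\ell_c$ passes through $0$'' is exactly $|c_1|^d=|c_2|^p$, and ``$g_c$ passes through $(-1,1)$'' is exactly $d\arg c_1-p\arg c_2\equiv d\pi\pmod{2\pi}$; together they say precisely that $(-c_1)^d=c_2^p$, and --- using $\gcd(p,d)=1$ to solve $x^p=-1/c_1$ and $x^d=1/c_2$ simultaneously --- that $\phi_c(\C^\star)$ passes through the point $(-1,1)\in(\C^\star)^2$, which is a vertex of the coamoeba $\overline{\Arg(\cH)}$. Thus the concise curve is equivalent to the two bulleted conditions, and it remains to prove that $c\notin\ct_A$ iff $\phi_c(\C^\star)\ni(-1,1)$.

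For the generic direction I would use the piecewise-algebraic description of $L$ from Section \ref{sec:pthyper}. When $\ell_c$ misses the origin, i.e. $|c_1|^d\ne|c_2|^p$, the line $\ell_c$ is transverse to $\Log(L)$ and meets it only in the relative interiors of its rays, over each of which $L$ agrees with an explicit algebraic curve. Intersecting $\phi_c(\C^\star)$ with these curves amounts to solving a binomial equation of the type $c_2x^d=\mathrm{const}$, $c_1x^p=\mathrm{const}$, or $c_1x^p+c_2x^d=0$, contributing respectively $d$, $p$ or $d-p$ solutions, of which one retains only those whose $\Log$ lands in the corresponding leg. A direct modulus comparison shows that for $|c_1|^d<|c_2|^p$ all $d$ solutions of the first equation lie in their leg, while for $|c_1|^d>|c_2|^p$ the other two legs contribute $p$ and $d-p$ solutions; in both regimes $\st_c$ consists of exactly $d$ points, so $c\in\ct_A$. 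This already gives $\C^2\setminus\ct_A\subseteq\{|c_1|^d=|c_2|^p\}$.

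On the wall $|c_1|^d=|c_2|^p$ the whole intersection collapses onto $\Log^{-1}(0)$, where $L$ is the closed coamoeba lifted to $\{|z|=|w|=1\}$, so that $\phi_c(\C^\star)\cap L$ becomes the family of arcs $g_c\cap\overline{\Arg(\cH)}$. For a geodesic $g_c$ avoiding the three coamoeba vertices, the intersection numbers of direction $(p,d)$ with the three boundary geodesics are $d$, $p$ and $d-p$, giving $2d$ transverse crossings and hence $d$ arcs, so $c\in\ct_A$ still. The decisive point is the local behaviour at the vertices, which I would analyse by describing each coamoeba corner as a pair of opposite planar cones: at $(-1,1)$ the relevant cone is bounded by the slopes $1$ and $+\infty$, so $0<p<d$ places $(p,d)$ strictly inside it, $g_c$ enters the coamoeba on both sides and two arcs merge into one, dropping the count to $d-1$; at the other two vertices $(1,-1)$ and $(-1,-1)$ the direction $(p,d)$ falls outside both cones, so the vertex is an isolated point of the intersection which exactly compensates for the one arc lost there, keeping the count at $d$. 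Hence $c\notin\ct_A$ precisely when $g_c$ passes through $(-1,1)$ together with $\ell_c\ni0$, i.e. when $\phi_c(\C^\star)\ni(-1,1)$, i.e. when $(-c_1)^d=c_2^p$.

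The main obstacle is exactly this vertex analysis: one must verify not only that passing through $(-1,1)$ merges two components, but crucially that passing through the two other coamoeba vertices leaves the component count unchanged. This asymmetry is where the hypothesis $0<p<d$ genuinely enters, and getting the bookkeeping right --- checking that the two arcs adjacent to the merging vertex really belong to distinct components beforehand, and that at the remaining vertices the contact points are genuinely isolated and compensate the missing arc --- is the delicate part of the argument.
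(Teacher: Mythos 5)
Your proposal is correct and follows essentially the same route as the paper's proof: the same dichotomy on whether $\Log\big(\phi_c(\C^\star)\big)$ passes through the origin, the same count of $d$ isolated solutions in the generic case, and the same identification of $(-1,1)$ as the unique vertex of $\overline{\Arg(\cH)}$ whose corner cones contain the direction $(p,d)$, so that only there do two arcs merge and the component count drop to $d-1$. The vertex bookkeeping you single out as delicate (the isolated contact at $(-1,-1)$ and $(1,-1)$ exactly compensating the lost arc) is precisely what the paper compresses by counting components through their endpoints on $g_z$; your version just makes that step, and the injectivity of $\phi_c$, explicit.
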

\begin{proof}
If the line $\Log\big(\phi_c(\C^\star)\big)$ does not pass through $0\in \R^2$, then the latter line intersects $\Log(L)$ transversely in either one or two points. The respective fibers in $\phi_c(\C^\star)$ and $L$ over these points consist of geodesics intersecting transversely in the corresponding argument tori  in $d$ points in total. We deduce that the first condition is necessary.

Assume therefore that  $\Log\big(\phi_c(\C^\star)\big)$ passes through $0$. Observe that the origin is the only intersection point between $\Log\big(\phi_c(\C^\star)\big)$ and $\Log(L)$ in that case. The connected components of the set of tropical solutions $\st_c$ are then in correspondence with the connected components of $\Arg\big(\phi_c(\C^\star)\big) \cap \Arg(\cH)$. The geodesic $\Arg\big(\phi_c(\C^\star)\big)$ has slope $(p,d)$, hence it intersects the geodesic $g_z$ of Figure \ref{fig:ptline} in $d$ distinct points $x_1,\cdots,x_d$ (recall that $p$ and $d$ are coprime). Each of the latter point is the endpoint of a connected component of $\Arg\big(\phi_c(\C^\star)\big) \cap \Arg(\cH)$. If $\Arg\big(\phi_c(\C^\star)\big)$ does not pass through $(-1,1)$, the other endpoint lies on exactly one of the two remaining geodesics $g_w$ and $g_\infty$. In particular, the other endpoint is away from the yellow geodesic and each of the $x_j$-s defines a distinct connect component of $\Arg\big(\phi_c(\C^\star)\big) \cap \Arg(\cH)$. Thus, the second condition in the statement is also necessary.

By the same argument, we observe that $\Arg\big(\phi_c(\C^\star)\big) \cap \Arg(\cH)$ consists of exactly $d-1$ connected components when $\Arg\big(\phi_c(\C^\star)\big)$ passes through $(-1,1)$. It follows that the two conditions are sufficient.

To conclude, the two conditions are equivalent to the fact that $\phi_c(\C^\star)$ passes through the point $(-1,1)\in \C^2$, which in turn is equivalent to the possibility of writing $c=(-x^p,x^d)$ for some $x \in \C^\star$. We deduce that the set $\C^2 \setminus \ct_A$ is the algebraic curve $(-c_1)^d=(c_2)^p$.
\end{proof}

\begin{Remark}\label{rem:Lib&stable}
$\mathbf{1.}$ The above lemma is the tropical counterpart to \cite[Theorem B]{Lib90}. Here, we recover the fact that the fundamental group of $\ct_A$ is the group of the $(p,d)$-torus knot.

$\mathbf{2.}$ The algebraic curve $(-c_1)^d=(c_2)^p$ is globally fixed by $H_t$ for any $t>0$, and so is its complement $\ct_A$. Indeed, the self-diffeomorphism $H_t$ corresponds to the rescaling by $\frac{1}{\log(t)}$ on the first factor of the logarithmic coordinates $\Log \times \Arg$ and the projection of $(-c_1)^d=(c_2)^p$ under $\Log$ is a line passing through $0\in \R^2$ and is therefore preserved under the latter rescaling. 
\end{Remark}

We are now ready to show that coarse braids obtained by tropical construction lead to braids in the image of $\mu^\star_A$.

\begin{Proposition}\label{prop:correspondence}
Let $\gamma : S^1 \rightarrow \ct_A$ be a loop such that the set $b':= \left\lbrace (\theta, p) \in S^1\times \C^\star \, \big|  \, p \in  \st_{\gamma(\theta)}\right\rbrace$ is a coarse braid. 
Then, there exists $\ell$ in $\pi_1(\cC_A)$ such that $\mu_A^\star(\ell)=[b'] \in B_d^\star$ (see Definitions \ref{def:coarsebraid} and \ref{def:classcoarsebraid}). Moreover, we can choose $\ell$ so that $\im(\ell)$ is contained in $\{c_0$=$c_2$=$1\}$ (respectively $\{c_0$=$c_1$=$1\}$) if $\im(\gamma)$ is. 
\end{Proposition}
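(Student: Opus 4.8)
The plan is to transport the loop $\gamma$ from the space of tropical conditions $\ct_A$ to the space of conditions $\cC_A$ by means of the diffeomorphisms $H_t$, and to show that the induced braid monodromy is exactly the class of the coarse braid $b'$. Write $h_t:\C^\star\to\C^\star$ for the one-variable analogue of $H_t$, namely $h_t(x)=\frac{x}{\vert x\vert}\vert x\vert^{1/\log t}$, so that $H_t=h_t\times h_t$. The key feature of $h_t$ is that it is multiplicative, fixes $1$, and is isotopic to $\id_{\C^\star}$ (interpolate the radial exponent from $1$ to $1/\log t$). Multiplicativity, together with the shape of $\phi_c$, gives the intertwining identity $H_t\circ\phi_c=\phi_{H_t(c)}\circ h_t$. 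Setting $\cH_t:=H_t(\cH\cap(\C^\star)^2)$, a direct substitution then yields
\[ h_t\big(\cS_{H_t^{-1}(c)}\big)=\phi_c^{-1}(\cH_t) \]
for every $c\in(\C^\star)^2$. Since $\cH_t\to L$ in the Hausdorff distance, the right-hand side converges to $\phi_c^{-1}(L)=\st_c$. I would take as candidate the loop $\ell_t:=H_t^{-1}\circ\gamma:S^1\to\C^2$ and set $\ell:=[\ell_t]\in\pi_1(\cC_A)$.

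First I would verify that $\ell_t$ is a loop in $\cC_A$ for $t$ large. As $\gamma$ produces a $d$-stranded coarse braid, its image lies in $\ct_A\cap(\C^\star)^2$, and $H_t^{-1}$ preserves $(\C^\star)^2$, so it suffices to see that $\ell_t$ avoids the $A$-discriminant $\Delta$. The conceptual route is to use $h_t(\cS_{H_t^{-1}(c)})=\phi_c^{-1}(\cH_t)\to\st_c$: for $c=\gamma(\theta)\in\ct_A$ the set $\st_{\gamma(\theta)}$ has $d$ components, so uniform Hausdorff convergence over the compact set $\gamma(S^1)$ forces $\phi_{\gamma(\theta)}^{-1}(\cH_t)$, hence $\cS_{H_t^{-1}(\gamma(\theta))}$, to have $d$ distinct components for $t$ large. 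Equivalently, one may argue explicitly: since $A=\{0,p,d\}$ is a circuit, $\Delta$ is a binomial $\{c_1^d=K\,c_2^p\}$, and by Lemma \ref{lem:tropdiscr} its tropical counterpart is $\{c_1^d=(-1)^d c_2^p\}$, the two constants having equal argument and differing only in modulus. The function $F:=c_1^d/c_2^p$ satisfies $F\circ H_t^{-1}=h_t^{-1}\circ F$ by multiplicativity, so $\ell_t(\theta)\in\Delta$ iff $F(\gamma(\theta))=h_t(K)$; as $h_t(K)\to(-1)^d$ while $F\circ\gamma$ stays at positive distance from $(-1)^d$ (its image being compact and inside $\ct_A$), this fails for $t$ large. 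Distinct large values of $t$ give loops homotopic in $\cC_A$ through $s\mapsto H_{t^s}^{-1}\gamma$, so $\ell$ is well defined.

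It remains to identify $\mu_A^\star(\ell)$. By definition this class is represented by the geometric braid $G_t:=\{(\theta,x)\,:\,x\in\cS_{\ell_t(\theta)}\}$. Applying the fibrewise self-homeomorphism $\id_{S^1}\times h_t$, which is isotopic to the identity because $h_t$ is, carries $G_t$ to the isotopic braid $\{(\theta,y)\,:\,y\in h_t(\cS_{H_t^{-1}(\gamma(\theta))})\}=\{(\theta,y)\,:\,y\in\phi_{\gamma(\theta)}^{-1}(\cH_t)\}$. As $t\to+\infty$ these braids converge in the Hausdorff distance, uniformly in $\theta$, to $b'=\{(\theta,p):p\in\st_{\gamma(\theta)}\}$; after a harmless perturbation making them honest geometric braids, Lemma \ref{lem:braid} and Definition \ref{def:classcoarsebraid} identify their common class with $[b']$. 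Hence $\mu_A^\star(\ell)=[G_t]=[b']$. For the last clause, $h_t$ fixes $1$, so $H_t^{-1}$ preserves the slices $\{c_2=1\}$ and $\{c_1=1\}$; thus $\ell_t=H_t^{-1}\gamma$ keeps the image inside $\{c_0$=$c_2$=$1\}$ (resp. $\{c_0$=$c_1$=$1\}$) whenever $\gamma$ lies there.

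The main obstacle is the last step: passing from Hausdorff convergence of solution sets to an equality of elements of $B_d^\star$. One must guarantee that for large $t$ the genuine configurations $\cS_{\ell_t(\theta)}$ consist of $d$ points varying continuously with $\theta$ and sweeping out exactly the combinatorics of $b'$. This is precisely the role of the coarse-braid formalism of Lemma \ref{lem:braid}, and the uniformity of the convergence over the compact loop $\gamma(S^1)$ is what legitimises choosing a single threshold ``$t$ large''.
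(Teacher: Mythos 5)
Your proof is correct and shares the paper's overall strategy (transport by $H_t^{-1}$, multiplicativity of $h_t$, and Lemma \ref{lem:braid} to identify the limiting class), but it executes both delicate steps differently, so a comparison is worthwhile. The paper rescales one coordinate of $\gamma$ by the constant $h_t\big(\tfrac{d}{p^{p/d}}\big)$, chosen precisely so that the rescaled loop avoids the genuine discriminant for \emph{every} $t>0$; the payoff of this exactness is that one can evaluate at $t=e$, where $H_e=h_e=\id$, so that the loop $\ell(\theta)=c_\theta^e$ \emph{is} a loop in $\cC_A$ whose monodromy braid $b_e$ sits inside the single continuous family $b_t$, $t\in[e,\infty)$, converging to $b'$ — no further correction is needed. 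Moreover the two slice conditions $\{c_0$=$c_2$=$1\}$ and $\{c_0$=$c_1$=$1\}$ force the paper to make two different choices of which coordinate to rescale. You instead take the bare pullback $\ell_t=H_t^{-1}\circ\gamma$, prove discriminant-avoidance only for $t$ large (via the flow $h_t(K)\to(-1)^d$ of the binomial constant together with compactness of $\gamma(S^1)$), and must then add a step the paper does not need: identifying the monodromy braid of $\ell_t$ with its image under the fibrewise homeomorphism $\id\times h_t$, using that $h_t$ is isotopic to $\id_{\C^\star}$. Your variant is more robust in one respect — it uses only that the discriminant constant is a positive real multiple of $(-1)^d$, never its exact value (the paper's stated constant appears to be slightly off, harmlessly so) — and it handles both slice conditions simultaneously since $h_t(1)=1$.

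One caution. Your first, ``conceptual'' justification of discriminant-avoidance, and likewise the phrase ``converge in the Hausdorff distance, uniformly in $\theta$'', cannot be taken literally: at the finitely many $\theta$ where $\Log\big(\phi_{\gamma(\theta)}(\C^\star)\big)$ passes through the origin, the fibre $\st_{\gamma(\theta)}$ contains arcs of positive length, and a set of at most $d$ points can never be Hausdorff-close to such a fibre. What converges is the \emph{total} set $\big\lbrace(\theta,y) : y\in\phi_{\gamma(\theta)}^{-1}(\cH_t)\big\rbrace\subset S^1\times\C^\star$ to $b'$, not the individual fibres over $\theta$ in the exceptional set. (The paper's ``second claim'' carries the same imprecision, so this does not put you below its level of rigour.) Your explicit binomial argument for avoiding the discriminant is immune to this issue and is the one to retain; with it, and with Lemma \ref{lem:braid} applied to the total sets, your proof is complete.
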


\begin{proof}
The strategy is to construct an explicit one-parameter family of geometric braids as in Lemma \ref{lem:braid}. This family will be chosen in two different ways in order to fit with the  extra assumption that $\im(\gamma)$ might be contained in $\{c_0$=$c_2$=$1\}$  (respectively in $\{c_0$=$c_1$=$1\}$).

For any $c=(c_1,c_2) \in \C^2$ and any $t>0$, we define the element $c^t \in \C^2$ by 
$$\textstyle c^t:=\big(h_t\big(\frac{d}{p^{p/d}}\big)\cdot c_1,c_2\big) \; \big(\text{respectively } c^t:=\big(c_1,h_t\big(\frac{p}{d^{d/p}}\big)\cdot c_2\big) \big)$$
where $h_t$ is the self-diffeomorphism $h_t(x)=\frac{x}{\vert x\vert } \vert x\vert^{\frac{1}{\log(t)}}$ on $\C^\star$. 
For both definitions of $c^t$, we claim first that $H_t^{-1}(c^t)\in \cC_A$ for any any $t>0$ if $c\in \ct_A$ and second that $h_t\big(\cS_{H_t^{-1}(c^t)}\big)$ converges in Hausdorff distance to $\st_c$ when $t$ becomes arbitrarily large. If we denote $c_\theta:=\gamma(\theta)$ and $c_\theta^t$ constructed from $c_\theta$ as above, we deduce that for any $t>0$, the set 
\[b_t := \left\lbrace (\theta,q) \in S^1\times \C^\star \big| q \in h_t \big( \cS_{H_t^{-1}(c_\theta^t)}\big) \right\rbrace \]
is a geometric braid that can be made arbitrarily close to $b'$ for sufficiently large $t$. Now, observe that the geometric braids $b_t$ are isotopic to each other for any $t\geqslant e$ and that $b_e$ is the braid 
\[b_e = \left\lbrace (\theta,q) \in S^1\times \C^\star \big| q \in  \cS_{c_\theta^e} \right\rbrace.\]
Defining $\ell : S^1 \rightarrow \cC_A$ by $\ell(\theta)=c_\theta^e$, we deduce that that $[b_e]= \mu_A^\star(\gamma)$. Setting $\varepsilon:=1/t$, we conclude that the one-parameter family $b_\varepsilon$ fulfils the assumptions of Lemma \ref{lem:braid} and therefore that $\mu_A^\star(\ell)=[b']$. Moreover, it is clear from the construction that $\im(\ell)$ is contained in $\{c_0$=$c_2$=$1\}$ (respectively $\{c_0$=$c_1$=$1\}$) if $\im(\gamma)$ is.

It remains to prove the claims. We give the proof for the first definition of $c^t$ (the proof for the second definition is similar). For the first claim, observe that $\cC_A$ is the complement of the curve $($-$c_1/d)^d=(c_2/p)^p$. Indeed, we can parametrise the pairs $(c,x)$ such that the trinomial corresponding to $c\in\C^A$ is singular at $x$ by $x\mapsto \big(\frac{-d}{x^d(d-p)}, \frac{p}{x^p(d-p)}\big)$. Then, the vector $H_t^{-1}(c^t)$ is not in $\cC_A$ if and only if 
$$\textstyle \big(-\frac{h_t^{-1}(c_1)}{d}\cdot \frac{d}{p^{p/d}}\big)^d=\big(\frac{h_t^{-1}(c_2)}{p}\big)^p \; \Leftrightarrow \; \big(-h_t^{-1}(c_1)\big)^d=\big(h_t^{-1}(c_2)\big)^p$$
 if and only if $H_t^{-1}(c)$ is not in $\ct_A$, by Lemma \ref{lem:tropdiscr}. By Remark \ref{rem:Lib&stable}.2, the latter is equivalent to $c$ not lying in $\ct_A$. For the second claim, we have for any $c=(c_1,c_2)\in \C^2$ that 
\[
\begin{array}{rl}
\st_c \;:=\;\displaystyle \phi_c^{-1}\Big(\lim_{t\rightarrow \infty} H_t(\cH)\Big)& \supset \;
\displaystyle \lim_{t\rightarrow \infty} \phi_c^{-1}\big(H_t(\cH)\big) \\
& \\
& = \; \displaystyle \lim_{t\rightarrow \infty}\phi_c^{-1} \left( \left\lbrace  (z,w) \in (\C^\star)^2 \, \big| \, 1 + h_t^{-1}(z)+h_t^{-1}(w)=0\right\rbrace\right)\\ & \\
& = \; \displaystyle \lim_{t\rightarrow \infty}\left\lbrace  x \in \C^\star \, \big| \, 1 + h_t^{-1}(c_1x^p)+h_t^{-1}(c_2x^d)=0\right\rbrace\\ & \\
& =\; \displaystyle \lim_{t\rightarrow \infty}\left\lbrace  x \in \C^\star \, \big| \, 1 + h_t^{-1}(c_1)\big(h_t^{-1}(x)\big)^p+h_t^{-1}(c_2)\big(h_t^{-1}(x)\big)^d=0\right\rbrace\\ & \\
& = \displaystyle \lim_{t\rightarrow \infty}\; h_t \big( \phi_{H_t^{-1}(c)}(\cH) \big) \; := \; \lim_{t\rightarrow \infty}\; h_t \big( \cS_{H_t^{-1}(c)}\big)
\end{array}
\]
It implies that the set $h_t \big( \cS_{H_t^{-1}(c)} \big)$ can be made arbitrarily close to $\st_c$ for $t$ sufficiently large. Since $c^t$ converges to $c$ for large $t$, the claim follows.
\end{proof}

\subsection{Tropical construction of coarse braids}

Motivated by Proposition \ref{prop:correspondence}, we will construct explicit loops in the space of tropical conditions $\ct_A$ leading to coarse braids.

We will now construct the loop $\gamma : S^1 \rightarrow \ct_A$ giving rise to the sought coarse braid.
First, we fix the base-point $c^{(0)}$ of $\gamma$ to be of the form $c^{(0)}:=(e^{i(\pi-\varepsilon)-1},1)$ where $\varepsilon>0$ is arbitrarily small. In particular, the geodesic $\Arg\big(\phi_{c^{(0)}}(\C^\star)\big)$ of slope $(p,d)$ passes arbitrarily close to the point $(-1,1)$. The line $\Log\big(\phi_{c^{(0)}}(\C^\star)\big)$ passes through $(-1,0)$. We deduce from Lemma  \ref{lem:tropdiscr} that $c^{(0)}$ belongs to $\ct_A$. The itinerary of $\gamma$ is as follows.

\textbf{Day 1}: we start from $c^{(0)}$ and follow the path $x \mapsto c:= (e^{x+i(\pi-\varepsilon)},1)$ from $-1$ to $1$. While doing so, the geodesic $\Arg\big(\phi_{c}(\C^\star)\big)$ remains unchanged. In particular, we stay inside $\ct_A$. The line $\Log\big(\phi_{c}(\C^\star)\big)$ passes through $(x,0)$  and moves to the right as $x$ increases. In particular, the latter line passes through $ 0\in\R^2$ when $x=0$. We denote the corresponding point $c^{(\times)}:=(e^{i(\pi-\varepsilon)},1)$. We take our first break at the point $c^{(1)}:=(e^{1+i(\pi-\varepsilon)},1).$

\textbf{Day 2}: we start from $c^{(1)}$ and follow the path $\theta \mapsto c:=(e^{1+i\theta},1)$ from $\pi-\varepsilon$ to $\pi+\varepsilon$. In particular, we stay inside $\ct_A$. The geodesic $\Arg\big(\phi_{c}(\C^\star)\big)$ passes through $(e^{i\theta},1)$  and moves to the right as $\theta$ increases, crossing $(-1,1)$ on the way. We take our second break at the point $c^{(2)}:=(e^{i(\pi+\varepsilon)+1},1).$

\textbf{Day 3}: we start from $c^{(2)}$ and follow the path $x \mapsto c:=(e^{x+i(\pi+\varepsilon)},1)$ from $1$ to $-1$. While doing so, the geodesic $\Arg\big(\phi_{c}(\C^\star)\big)$ remains unchanged. In particular, we stay inside $\ct_A$. Following this path, the geodesic $\Log\big(\phi_{c}(\C^\star)\big)$ moves to the left as $x$ decreases.  In particular, the latter line passes through $ 0\in\R^2$ when $x=0$. We denote the corresponding point $c^{(+)}:=(e^{i(\pi+\varepsilon)},1)$. We take our third break at the point $c^{(3)}:=(e^{i(\pi+\varepsilon)-1},1).$

\textbf{Day 4}: we start from $c^{(3)}$ and follow the path $\theta \mapsto c:=(e^{i\theta-1},1)$ from $\pi+\varepsilon$ to $\pi-\varepsilon$. We recover the point $c^{(0)}$ which is our final destination. During this last day, we stayed inside $\ct_A$ since the line $\Log\big(\phi_{c}(\C^\star)\big)$ remained unchanged.

\begin{Proposition}\label{prop:trinomial}
For the loop $\gamma : S^1 \rightarrow \ct_A$ described above, the subset $b':= \left\lbrace (\theta, p)  \, \big|  \, p \in  \st_{\gamma(\theta)}\right\rbrace$ of $S^1\times \C^\star$ is a coarse braid. Moreover, the class $[b'] \in B_d^\star$ is the element $[b_1]^{-1}$.
\end{Proposition}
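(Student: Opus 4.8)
The plan is to describe the set $\st_{\gamma(\theta)}$ explicitly as $\theta$ runs through the four days and to read off the resulting diagram directly. Since $c_2=1$ throughout, we have $\phi_c(x)=(c_1x^p,x^d)$, so $\st_c$ is controlled by the pair $(\log|c_1|,\arg c_1)$, which traces a small rectangle around $(0,\pi)$; by Lemma \ref{lem:tropdiscr} the curve $\C^2\setminus\ct_A$ meets the strip $\arg c_1\in[\pi-\varepsilon,\pi+\varepsilon]$ only at $c=(-1,1)$, so $\gamma$ encircles exactly one point of the tropical discriminant. The first thing I would record is the piecewise-algebraic shape of $\st_c$ dictated by the description of $L$ in Section \ref{sec:pthyper}: the line $\Log(\phi_c(\C^\star))$ has direction $(p,d)$ and passes through $(\log|c_1|,0)$, so it meets $\Log(L)$ either in the single ray along which $|z|$ is small (when $\log|c_1|<0$) or in the two remaining rays (when $\log|c_1|>0$).

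Translating this, for $\log|c_1|<0$ the set $\st_c$ consists of the $d$-th roots of $-1$, lying on the unit circle at the arguments $\pi(2k+1)/d$ and \emph{independent of $c_1$}; for $\log|c_1|>0$ it splits into the $p$ roots of $c_1x^p=-1$ (on the piece $z=-1$, of modulus $|c_1|^{-1/p}<1$) together with the $d-p$ roots of $x^{d-p}=-c_1$ (on the piece $z=-w$, of modulus $|c_1|^{1/(d-p)}>1$), whose total is $d$. Finally, at $\log|c_1|=0$ — which occurs exactly once on Day 1 (at $c^{(\times)}$) and once on Day 3 (at $c^{(+)}$) — the line passes through the origin and $\st_c$ degenerates to the union of $d$ arcs $\{e^{i\psi}:(\arg c_1+p\psi,\,d\psi)\in\overline{\Arg(\cH)}\}$ on the unit circle. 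This already shows that $b'$ is a coarse braid: the projection $\pi'$ is a degree-$d$ covering away from the two-point set $T=\{c^{(\times)},c^{(+)}\}$, and the condition of Definition \ref{def:coarsebraid} holds because the preimage of any arc of $S^1\setminus T$ is $d$ disjoint arcs joining the corresponding fibres.

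To identify $[b']$ I would read the diagram in the projection $\rho$, using arguments to detect crossings and moduli to decide over/under. Because the roots are static for $\log|c_1|<0$, Day 4 carries no braiding, and on Day 2 (where $\arg c_1$ sweeps through $\pi$ at fixed $\log|c_1|=1$) a short computation with the split arguments shows that, since $\gcd(p,d-p)=1$, the only coincidence of arguments occurs at $\arg x=0$, between the $z=-1$ branch (arg decreasing through $0$, modulus $e^{-1/p}<1$) and the $z=-w$ branch (arg increasing through $0$, modulus $e^{1/(d-p)}>1$). Thus exactly one pair of strands — the two roots of $-1$ nearest the corner $x=1$, at arguments $\pm\pi/d$ — executes a single half-twist, the branch of larger modulus passing underneath, while the remaining $d-2$ strands are carried along without crossing. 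Comparing this half-twist (its location at $\arg x=0$, the over/under fixed by the moduli, and the sense fixed by the orientation of $\gamma$) with the element $[b_1]$ of Figure \ref{fig:braidgenerators} should give $[b']=[b_1]^{-1}$.

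The main obstacle is this last matching, and in particular getting the sign exactly right. One must resolve the two arc-degeneracies of Days 1 and 3 via the geometric braids of Lemma \ref{lem:braid}, i.e.\ track how the $d$ unit-circle roots redistribute into the $p+(d-p)$ split branches, and verify that these two radial transitions introduce no spurious crossings, so that the whole monodromy reduces to the single negative crossing of Day 2. Some care with the choice of fundamental domain $[a,a+2\pi[$ is also required, since the two interacting strands are wraparound-adjacent on the cylinder $\C^\star$ — which is precisely why the answer is the surface-braid generator $[b_1]^{\pm1}$ rather than an Artin generator — and this is what ultimately pins down $[b_1]^{-1}$ as opposed to $[b_1]$.
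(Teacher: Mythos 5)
Your proposal is correct and follows essentially the same route as the paper's proof: both establish the coarse-braid property from the two degenerate fibres at $c^{(\times)}$ and $c^{(+)}$, locate the unique crossing of the $\arg$-projections at $\arg x=0$ between one inner ($z=-1$) strand and one outer ($z=-w$) strand — the strands based at arguments $\pm\pi/d$ — and settle over/under from the moduli (the $r_\infty$ branch versus the $r_w$ branch), with the fundamental-domain choice pinning down $[b_1]^{-1}$. Your explicit root computation (with the $\gcd(p,d-p)=1$ uniqueness argument) even places the crossing correctly on Day 2 at $\theta=\pi$, where the paper's prose says ``between $c^{(2)}$ and $c^{(3)}$'' — evidently a slip, since on Day 3 the argument projections of the strands do not move at all.
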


The statement above depends on the choice of fundamental domain we use to represent braids, see Section \ref{sec:braid}. We will make this choice precise in the course of the proof. The arguments in the proof below are rather geometric and fairly elementary. It might be helpful though to consider the concrete exemple pictured in Figure \ref{fig:tropbraid}. 

\begin{proof}
Denote the projection $\pi': b' \rightarrow S^1$ as at the end of Section \ref{sec:braid}.
Travelling along $\gamma$, the line $\Log\big(\phi_c(\C^\star)\big)$ passes through $0\in\R^2$ exactly twice, namely when $c\in \{c^{(\times)}, c^{(+)}\}$. For such $c$, the geodesic $\Log\big(\phi_c(\C^\star)\big)$ avoids the point $(-1,1)$ so that the set $\st_{c}$ consists of $d$ path connected components. Similarly, we observe that the preimage $\pi'^{-1}(U)$ of a small neighbourhood $U$ around any of $c\in \{c^{(\times)}, c^{(+)}\}$ consists of $d$ path connected components. For any other $c$ in $\im(\gamma)$, the set $\st_{c}$ consists of $d$ distinct points varying continuously. It follows then that $b'$ is a coarse braid.

To show that $[b']=[b_1]^{-1}$, we have the liberty to choose the real number $a$ determining the fundamental domain $[a,a+2\pi[ \times [0,2\pi[$ which we use to represent braids. We choose $a=-\frac{2\pi}{d}$. Then, we have that the $d$ points $\{e^{i2\pi k/d} \, \vert \, k\in \Z \}$ (to which $e^{ia}$ belongs) are mapped under $\Arg \circ \phi_{c^{(0)}}$ bijectively to the intersection of  $\Arg\big(\phi_{c^{(0)}}(\C^\star) \big)$ with the geodesic $\{\arg(w)=1\}$.
The image under $\Arg$ of the $d$ points of intersection $\phi_{c^{(0)}}(\C^\star) \cap L$ are evenly distributed on the geodesic $\Arg\big(\phi_{c^{(0)}}(\C^\star) \big)$ and the geodesic $g_z:=\{\arg(w)=-1\}$. It follows that there is exactly one point of $\arg(\st_{c^{(0)}})$ in each component of the complement to the collection of evenly distributed points $\{e^{i2\pi k/d} \, \vert \, k\in \Z \}$ on $S^1$. While travelling along $\gamma$ between $c^{(0)}$ and $c^{(\times)}$, the set $\arg(\st_{c})$ is constant until we reach $c^{(\times)}$. By the above arguments, there is exactly one component of $\arg(\st_{c^{(\times)}})$ in each component of $S^1 \setminus \{e^{i2\pi k/d} \, \vert \, k\in \Z \}$. By construction of $\gamma$ and the choice of $a$, the two components of $\arg(\st_{c^{(\times)}})$ in the arcs $[e^{ia}, e^{i(a+2\pi/d)}]$ and $[e^{ia+2\pi/d}, e^{i(a+4\pi/d)}]$ are arbitrarily close to each other while the distance between any other pair of adjacent components of $\arg(\st_{c^{(\times)}})$ is arbitrarily large relative to $\varepsilon$. Equivalently, the projection under $\arg$ of the first two strands of $b'$ between $c^{(0)}$ and $c^{(\times)}$ are arbitrarily close to each other while the projection of other strands are far apart. The projection of the latter strands is unchanged between $c^{(\times)}$ and $c^{(2)}$. Between $c^{(2)}$ and $c^{(3)}$, the projection under $\arg$ of the first two strands of $b'$ meet when $\theta=\pi$. The projection of the first strand under $\Log$ sits on the ray $r_\infty \subset \Log(L)$ while the projection of the second strand sits on the ray $r_w \subset \Log(L)$. This implies that locally, the first strand has larger modulus than the second. In the corresponding diagram of $b'$, the first strand passes under the second. The projection under $\arg$ of the remaining strands of $b'$ do not meet between $c^{(2)}$ and $c^{(3)}$ and it should be clear from now that there will not meet between $c^{(3)}$ and our return to $c^{(0)}$. Since the same is true for the first two strands, the result follows.
\end{proof}

Recall that we have coordinates $c_0$, $c_1$, $c_2$ on $\C^A$ corresponding respectively to the monomials $1$, $x^p$ and $x^d$. 

\begin{Lemma}\label{lem:trinomial}
For any $j \in \{1,\cdots,d\}$, there exists a loop $\ell_j : S^1 \rightarrow \ct_A$ 
such that the subset $b_j':= \left\lbrace (\theta, p)  \, \big|  \, p \in  \st_{\ell_j(\theta)}\right\rbrace$ of $S^1\times \C^\star$ is a coarse braid of class $[b_j] \in B_d^\star$ and such that $\im(\ell_j)\subset \{c_0$=$c_2$=$1\}$. Moreover, the loop $\ell : S^1 \rightarrow \ct_A$ given by $\ell(e^{i \theta})=(e^{1+i(\pi-\varepsilon)},e^{-i\theta})$ defines a geometric braid $b$ with class $[b]=[\tau]$.
\end{Lemma}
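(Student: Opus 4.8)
The plan is to treat the two assertions in turn, establishing the statement about $[\tau]$ first since it feeds into the construction of the $\ell_j$. The organizing idea is the rotational symmetry $\sigma:x\mapsto e^{2\pi i/d}x$ of $\C^\star$. Because $\sigma^d=\id$, substituting $x\mapsto\sigma(x)$ in $\phi_c$ gives $\phi_c(\sigma(x))=\phi_{\Sigma(c)}(x)$ with $\Sigma(c_1,c_2):=(e^{2\pi ip/d}c_1,c_2)$, whence $\st_{\Sigma(c)}=e^{-2\pi i/d}\,\st_c$. The map $\Sigma$ preserves the slice $\{c_0$=$c_2$=$1\}$, preserves $\ct_A$ (it cyclically permutes the $d$ punctures $c_1=-e^{2\pi ik/d}$ of $\ct_A\cap\{c_0$=$c_2$=$1\}\simeq\C\setminus\{d\text{ points}\}$), and rigidly rotates the associated tropical solution sets by one $d$-th of a turn.

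For the second assertion, first note that $\ell$ yields an honest geometric braid: along $\ell$ the set $\Log\big(\phi_c(\C^\star)\big)$ is the fixed line $\{(1+ps,ds)\mid s\in\R\}$, which passes through $(1,0)\neq 0$ and therefore meets $\Log(L)$ transversely, so by the analysis in the proof of Lemma \ref{lem:tropdiscr} the set $\st_{\ell(\theta)}$ consists of $d$ points depending smoothly on $\theta$ (and $\ell(S^1)\subset\ct_A$ since $|c_1|^d=e^d\neq1=|c_2|^p$). To identify the class I would track these $d$ points as $\theta$ runs over $[0,2\pi]$: only $\arg(c_2)=-\theta$ varies, decreasing by $2\pi$, which forces the configuration to return to itself after a net rotation by $2\pi/d$. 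Comparing the resulting diagram in the chosen fundamental domain with Figure \ref{fig:braidgenerators} identifies the class as $[\tau]$; the one point that must be verified is the sign, i.e. that the rotation is in the direction drawn for $[\tau]$ rather than for its inverse.

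For the first assertion, the case $j=1$ is immediate: reversing the loop $\gamma$ of Proposition \ref{prop:trinomial} gives $\ell_1:=\overline\gamma$, whose coarse braid has class $[b_1]$ (the reverse of the class $[b_1]^{-1}$ obtained there), and $\im(\overline\gamma)\subset\{c_0$=$c_2$=$1\}$ because $c_2\equiv1$ along $\gamma$. For general $j$ I would transport $\overline\gamma$ by the symmetry, setting $\ell_j:=\rho_m\cdot\Sigma^{m}(\overline\gamma)\cdot\rho_m^{-1}$, where $\rho_m$ is the path rotating $c_1$ along the circle $|c_1|=e^{-1}$ from $c^{(0)}$ to $\Sigma^m(c^{(0)})$. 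Since $\Sigma$ preserves the slice, $\im(\ell_j)\subset\{c_0$=$c_2$=$1\}$; and since $\st_{\Sigma^m(c)}=e^{-2\pi i m/d}\st_c$, the coarse braid of $\Sigma^m(\overline\gamma)$ is that of $\overline\gamma$ rigidly rotated by $m$ clicks. The base-point conjugation by $\rho_m$ is the rotation automorphism of $B_d^\star$, i.e. conjugation by the rotation element, which the second assertion identifies with $[\tau]$; hence the class of $b_j'$ is $[\tau]^{\pm m}[b_1][\tau]^{\mp m}$. Because $\gcd(p,d)=1$, as $m$ runs over $\{0,\dots,d-1\}$ the punctures $c_1=-e^{2\pi imp/d}$ are each visited once, and correspondingly $\{[\tau]^{\pm m}[b_1][\tau]^{\mp m}\}=\{[b_1],\dots,[b_d]\}$ by the defining relation $[b_{k+1}]=[\tau]^k[b_1][\tau]^{-k}$; choosing $m=m(j)$ accordingly realises every $[b_j]$ by a loop in the slice.

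The main obstacle is the identification of the base-point transport with conjugation by $[\tau]$ and the fixing of the sign. Along $\rho_m$ the tropical solutions do not move by a rigid rotation — only the endpoints are related by the clean factor $e^{-2\pi im/d}$ — so one cannot simply read off the conjugating braid. One must either invoke the standard fact that the rotation-by-$2\pi/d$ automorphism of the surface braid group $B_d^\star$ is conjugation by the rotation element and match that element to $[\tau]$ using the second assertion, or, to remain self-contained, rerun the strand-tracking of Proposition \ref{prop:trinomial} directly around the puncture $c_1=-e^{2\pi imp/d}$: the four-day itinerary and the diagrammatic analysis carry over verbatim, the only change being that the unique crossing now involves the pair of adjacent strands cyclically shifted by $m$, yielding $[b_{\,\cdot}]^{-1}$ with the shifted index and hence $[b_j]$ after reversal. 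Either route sidesteps having to match $m(j)$ to $j$ exactly, since only the existence of some loop for each index is needed.
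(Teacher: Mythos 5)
Your construction of the loops $\ell_j$ is essentially the paper's own proof: for $j=1$ you take the reverse of $\gamma$ from Proposition \ref{prop:trinomial}, and for general $j$ you conjugate a rotated copy of this loop by a path moving $\arg c_1$ along the circle $\{\vert c_1\vert$=$e^{-1}, c_2$=$1\}$; this is exactly the paper's $\ell_j :=(\rho_j)^{-1}\circ (\gamma_{j})^{-1}\circ \rho_j$. However, the ``main obstacle'' you isolate is not an obstacle: along $\rho_m$ the tropical solution set does not merely fail to move rigidly, it does not move at all. For any $c=(c_1,1)$ with $\vert c_1\vert =e^{-1}$, the line $\Log\big(\phi_c(\C^\star)\big)$ meets $\Log(L)$ only at $(-1,0)\in r_z$, and the fiber of $L$ over that point is cut out by $\vert x\vert =1$ and $\arg(c_2x^d)=\pi$; hence $\st_c=\{x \mid x^d=-1\}$ at every point of $\rho_m$, independently of $\arg c_1$. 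The conjugating path therefore contributes the identity braid, the class of $\ell_j$ equals that of the rotated loop $\Sigma^m(\overline\gamma)$, and no identification of base-point transport with conjugation by $[\tau]$ is needed. Your fallback route (rerunning the strand-tracking of Proposition \ref{prop:trinomial} at the shifted base point) is precisely what the paper does, and it keeps the argument self-contained.

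The genuine gap is in your treatment of $[\tau]$. The inference ``only $\arg(c_2)=-\theta$ varies \ldots\ which forces the configuration to return to itself after a net rotation by $2\pi/d$'' does not follow, and it is false for the loop as written. As you yourself observe, $\Log\big(\phi_{\ell(\theta)}(\C^\star)\big)$ is the line through $(1,0)$ (since $\vert c_1\vert =e$); it meets $\Log(L)$ in one point of $r_w$ and one point of $r_\infty$. The $p$ solutions over $r_w$ are cut out by $\vert x\vert =e^{-1/p}$ and $\arg(c_1x^p)=\pi$, conditions that do not involve $c_2$ at all, so these $p$ strands are constant; the $d-p$ solutions over $r_\infty$ satisfy $(d-p)\arg x \equiv \pi+\arg c_1-\arg c_2$ and perform a cyclic rotation by $2\pi/(d-p)$. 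The resulting braid projects to a $(d-p)$-cycle in $\Sym_d$ rather than a $d$-cycle, so its class cannot be $[\tau]$. The same computation shows that the rigid-rotation picture (all points moving on the unit circle with velocity $2\pi/d$, as in the paper's proof) holds exactly when the base point has $\vert c_1\vert =e^{-1}$, i.e. the formula should read $e^{-1+i(\pi-\varepsilon)}$: then all $d$ solutions lie over $(-1,0)\in r_z$, satisfy $d\arg x\equiv \pi+\theta$, and rotate rigidly by one click, giving the rotation braid up to sign. So for the second assertion you must either correct the modulus of $c_1$ and redo the tracking as above, or accept that for the stated loop the conclusion is unavailable; and since your primary route to the first assertion leans on the second assertion to identify the conjugating element, that gap would propagate were it not for your self-contained fallback.
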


\begin{figure}[H]
\centering
\scalebox{1}{
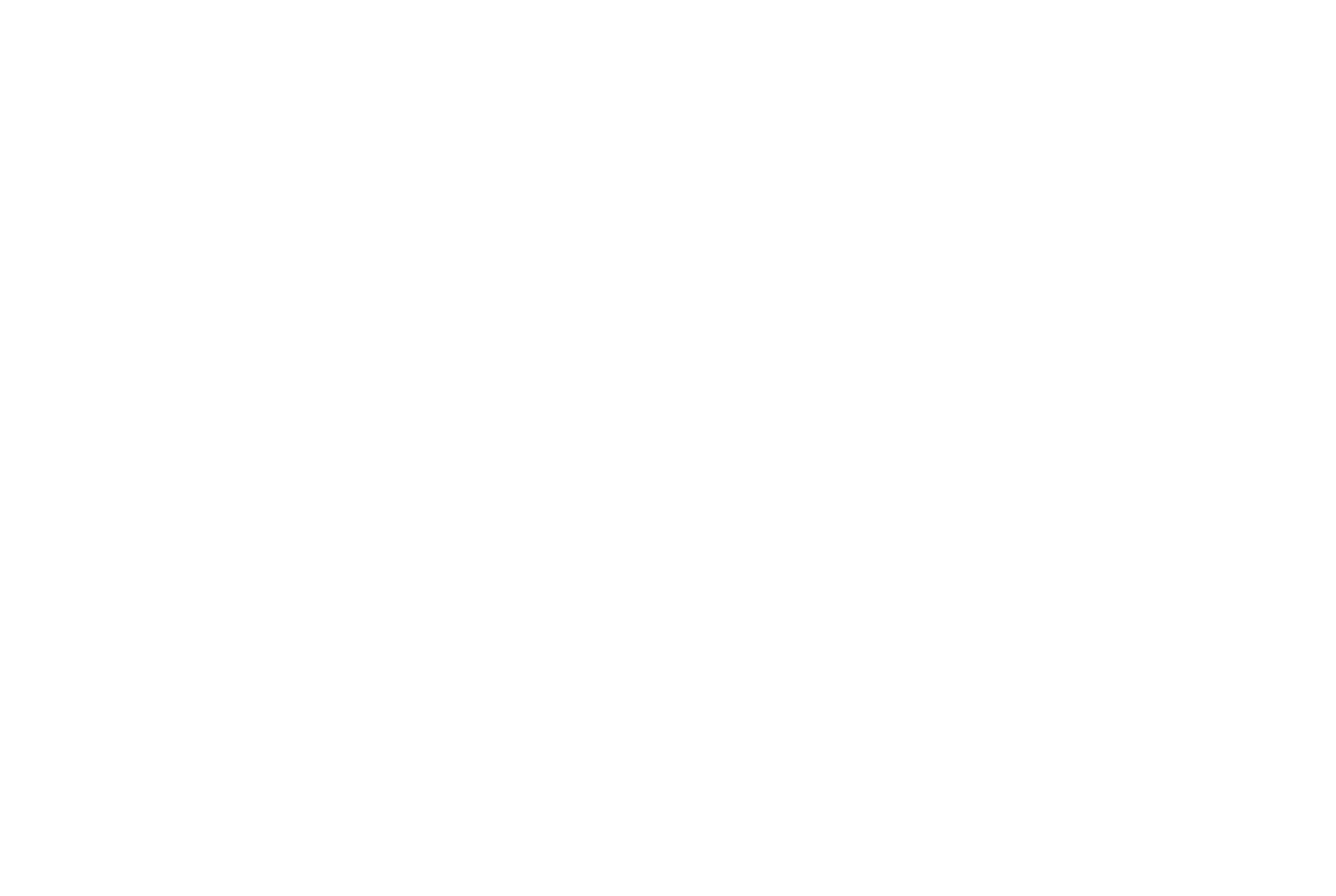}
\caption{The loop $\gamma$ for $A=\{0,3,7\}$. We represent the projection under $\Log$ (left) and $\Arg$ (right) of $L$ (blue) and $\phi_c(\C^\star)$ (brown). The projection of the intersection $L \cap \phi_c(\C^\star)$ is pictured in red in both $\R^2$ and $(S^1)^2$. The connected components of the red locus in $(S^1)^2$ are cyclically ordered along the brown geodesic, and ordered according to the choice of the parameter $a$ made in the proof of Proposition \ref{prop:trinomial}.}
\label{fig:tropbraid}
\end{figure}

\begin{proof}
For $j=1$, it suffices to consider the loop $\ell_1:=\gamma^{-1}$ of Proposition \ref{prop:trinomial}. For any other $j$, observe that the set of tropical solutions $\st_{c^{(0)}}$ coincides with $\st_{c_{(j)}}$ where $c_{(j)}:=(e^{i(\pi-\varepsilon)-1}e^{2i\pi \frac{j-1}{d}},1)$. Therefore,  we can also construct the loop $\gamma_j$ defined as $\gamma$ except that the base-point of $\gamma_j$ is $c_{(j)}$ instead of $c^{(0)}$. Consider the path $\rho_j:[0,1]\rightarrow \ct_A$ given by $$\rho_j(\theta)=(e^{i(\pi-\varepsilon)-1}e^{2i\pi\theta\frac{ (j-1)}{d}},1)$$ connecting $c^{(0)}$ to $c_{(j)}$. Then, the sought loop $\ell_j$ for $j \in \{2,\cdots, d\}$ is defined by $\ell_j :=(\rho_j)^{-1}\circ (\gamma_{j})^{-1}\circ \rho_j$. 
It follows from the construction that $\im(\ell_j)\subset \{c_0$=$c_2$=$1\}$ for any $j$.

Eventually, the set  $b:= \left\lbrace (\theta, p)  \, \big|  \, p \in  \st_{\ell(\theta)}\right\rbrace$ defined by $\ell$ is easily seen to be a geometric braid since $\Log\big(\phi_{\ell(\theta)}(\C^\star)\big)$ never passes through $0\in \R^2$. Each of the points in the set of tropical solutions $\st_{\ell(\theta)}$ travels clockwise along $S^1 \subset  \C$ with constant velocity $\frac{2\pi}{d}$ so that the diagram of $[b]$ is the same as the one of $[\tau]$.
\end{proof}

\begin{Corollary}\label{cor:trinomial}
The map $\mu^\star_A: \pi_1(\cC_A) \rightarrow B_d^\star$ is surjective. Moreover, the composition of the map $\pi_1(\cC_A\cap \{c_0$=$c_2$=$1\})\rightarrow \pi_1(\cC_A)$ with $\mu_A$ is surjective.
\end{Corollary}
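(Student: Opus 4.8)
The plan is to deduce both assertions formally from the constructions already in place: Proposition \ref{prop:correspondence} turns the loops of Lemma \ref{lem:trinomial} into elements of $\pi_1(\cC_A)$ with prescribed images under $\mu_A^\star$, and it then remains only to check that these images exhaust a generating set of the target group. The two assertions are handled by the same recipe but draw on different generating sets, namely those of Lemmas \ref{lem:generatorsbraidgroup} and \ref{lem:generatorArtin}.

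For the surjectivity of $\mu_A^\star$, I would invoke the generating set $\{[\tau],[b_1]\}$ of $B_d^\star$ supplied by Lemma \ref{lem:generatorsbraidgroup}. Lemma \ref{lem:trinomial} provides a loop $\ell_1$ in $\ct_A$ whose coarse braid has class $[b_1]$ and a loop $\ell$ whose geometric braid has class $[\tau]$; by Proposition \ref{prop:correspondence} each corresponds to an element of $\pi_1(\cC_A)$ realising the respective braid. Hence $\im(\mu_A^\star)$ contains $\{[\tau],[b_1]\}$ and so equals $B_d^\star$. The one point to handle is that $\ell_1$ is based at $c^{(0)}$ whereas the $[\tau]$-loop of Lemma \ref{lem:trinomial} is based at $c^{(1)}$; since $c^{(0)}$ and $c^{(1)}$ lie in the same path-component of $\ct_A$ (they are joined by the first leg of $\gamma$), I would transport the $[\tau]$-loop to the base point $c^{(0)}$, equivalently rerun the rotation construction there, so that both generators live in $\pi_1(\cC_A,c^{(0)})$ over the common base configuration $\st_{c^{(0)}}$.

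For the second assertion I would exploit that $B_d$, unlike $B_d^\star$, is generated by the braids $[b_1],\dots,[b_{d-1}]$ alone: by Lemma \ref{lem:generatorArtin} their projections are precisely Artin's standard generators. Crucially, every $\ell_j$ of Lemma \ref{lem:trinomial} satisfies $\im(\ell_j)\subset\{c_0=c_2=1\}$, so Proposition \ref{prop:correspondence} realises each $[b_j]$ by an element coming from $\pi_1(\cC_A\cap\{c_0=c_2=1\})$; composing with $\mu_A$ sends it to the generator $\sigma_j$. Letting $j$ range over $1,\dots,d-1$ yields a generating set of $B_d$ inside the image of the composition, which is therefore surjective; and here all loops already share the base point $c^{(0)}\in\{c_0=c_2=1\}$, so no transport is needed. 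The proof is thus essentially bookkeeping; the only genuinely structural point — and the reason the second assertion lands in $B_d$ rather than $B_d^\star$ — is that the missing generator $[\tau]$ is the monodromy of a full rotation of $c_2$ and hence cannot be confined to $\{c_0=c_2=1\}$, whereas the $[b_j]$ can.
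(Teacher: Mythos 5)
Your proposal is correct and follows essentially the same route as the paper: the paper's proof is exactly the combination of Proposition \ref{prop:correspondence} with Lemmas \ref{lem:generatorArtin}, \ref{lem:generatorsbraidgroup} and \ref{lem:trinomial}, using the generating set $\{[\tau],[b_1]\}$ of $B_d^\star$ for the first assertion and the Artin generators, realised by the loops $\ell_j$ confined to $\{c_0$=$c_2$=$1\}$, for the second. Your extra care with base points (transporting the $[\tau]$-loop from $c^{(1)}$ to $c^{(0)}$ along Day 1 of $\gamma$) is a detail the paper leaves implicit, and your closing remark about why $[\tau]$ cannot be confined to $\{c_0$=$c_2$=$1\}$ is a correct observation consistent with the paper's construction.
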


\begin{proof}
This follows from Proposition \ref{prop:correspondence} and Lemmas  \ref{lem:generatorArtin},  \ref{lem:generatorsbraidgroup} and \ref{lem:trinomial}.
\end{proof}

\begin{Remark}
Using the tropical construction above, we could prove Theorems \ref{thm:reduced} and \ref{thm:contractible} for trinomials without having to work out any relations in the braid group $B_d^\star$. Unfortunately, these tropical techniques cannot be applied to arbitrary support sets. For supports $A$ with four elements, the set $L$ is to be replaced with the phase-tropical plane $H\subset \R^3$. For particular supports $A$, the geodesic $\Arg\big(\phi_c(\C^\star)\big)$ intersects $\Arg(H)$ in less than $d$ components for any choice of $c$. Therefore, there is no generalisation of the loop $\gamma$ constructed above.
\end{Remark}

\section{Proofs of Theorem \ref{thm:reduced} and \ref{thm:contractible}}\label{sec:proofs}

In this section, the support $A\subset \N$ is reduced and with extremal elements $0$ and $d$. In order to prove the main theorems, we specialise to the case of all trinomials $\{0,p,d\}\subset A$ and implement an analogue of the Euclidean algorithm in  $B_d^\star$ in the name of Proposition \ref{prop:euclide}.

For any divisor $k\geqslant 2$ of $d$ and any $j \in \{1,\cdots,d\}$, define $J_{k,j}:=\big\lbrace \ell \in \{1,\cdots,d\} \big|$ $ \ell \equiv j \mod k \big\rbrace$ and the braid 
$$[b_{k,j}]:= \prod_{\ell\in J_{k,j}} [b_\ell].$$
Observe that the above elements $[b_\ell]$ commute so that $[b_{k,j}]$ is well defined.

\begin{Proposition}\label{prop:euclide}
Let $k, \ell \geqslant 2$ be two distinct divisors of $d$ and denote $q:=\lcm(k,\ell)$. 
The subgroup of $B_d^\star$ generated by $\cup_{1\leqslant j\leqslant d} \left\lbrace [b_{k,j}], [b_{\ell,j}] \right\rbrace$ contains $\cup_{1\leqslant j\leqslant d} \left\lbrace [b_{q,j}]\right\rbrace$.
\end{Proposition}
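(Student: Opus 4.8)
\subsection*{Proof proposal}

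The plan is to reduce the statement to an arithmetic fact and then lift that fact from the abelianisation of $B_d^\star$ into $B_d^\star$ itself. At the abelianised level every $[b_\ell]$ maps to a single class $g$, so $[b_{k,j}]\mapsto (d/k)g$ and $[b_{q,m}]\mapsto (d/q)g$ independently of $j,m$; since $\gcd(d/k,d/\ell)=d/\lcm(k,\ell)=d/q$, B\'ezout's identity gives integers $\alpha,\beta$ with $\alpha(d/k)+\beta(d/\ell)=d/q$. This is the Euclidean algorithm underlying the statement, and the whole difficulty is to realise it by an honest word in the $[b_{k,j}]$ and $[b_{\ell,j}]$. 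A first reduction: conjugation by $[\tau]$ cyclically shifts indices, $[\tau][b_\ell][\tau]^{-1}=[b_{\ell+1}]$, hence $[\tau][b_{k,j}][\tau]^{-1}=[b_{k,j+1}]$ and likewise for $[b_{\ell,\cdot}]$ and $[b_{q,\cdot}]$. Thus $G:=\langle [b_{k,j}],[b_{\ell,j}]\,:\,j\rangle$ is normalised by $[\tau]$, so it suffices to show that one $[b_{q,m_0}]$ lies in $G$; the rest follow by conjugating with powers of $[\tau]$.

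Next I would remove the coarser of the two periodicities by an induction on $d$. Since every $x\equiv j\pmod k$ splits into classes modulo $q$, one has $[b_{k,j}]=\prod_{m\equiv j\,(k)}[b_{q,m}]$ (a product over one coset of $\langle k\rangle$ in $\Z/q$), and similarly for $\ell$; here all factors commute, so the $[b_{q,m}]$ are the natural ``atoms''. Using the cabling homomorphism $f^{\ast}\colon B_q^\star\hookrightarrow B_d^\star$ of Corollary \ref{cor:nonreduced}, which is injective, one checks that $[b_{q,m}]=f^{\ast}([b_m])$ and that each $[b_{k,j}]=f^{\ast}$ of the analogous element of $B_q^\star$; hence $G=f^{\ast}(G')$ for the corresponding subgroup $G'\subset B_q^\star$, and $[b_{q,m}]\in G$ is equivalent to the same containment in $B_q^\star$. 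When $q<d$ this is the Proposition for the strictly smaller modulus $q$ (with $\lcm(k,\ell)=q$), so it is handled by induction. The remaining case is $q=d$, i.e.\ $\gcd(d/k,d/\ell)=1$, where the atoms are the genuine generators $[b_1],\dots,[b_d]$ and the goal is to produce a single $[b_\ell]$.

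In the case $q=d$ the key feature is that two generators $[b_x],[b_y]$ fail to commute only when $y\equiv x\pm1$, and that for fixed cosets $J_{k,i}$ and $J_{\ell,j}$ there is, by the Chinese remainder theorem applied modulo $\lcm(k,\ell)=d$, at most one index where $J_{k,i}$ and $J_{\ell,j}$ become adjacent in each direction. Consequently a commutator $\bigl[[b_{k,i}],[b_{\ell,j}]\bigr]$ is supported on a short connected arc around those (at most two) collisions: all commuting factors cancel and one is left with braid-relation words such as $[b_x][b_{x+1}][b_x]^{-1}=[b_{x+1}]^{-1}[b_x]$. Choosing $i,j$ so that only one collision occurs isolates an element supported on a single adjacent pair, and cancelling shared factors as in $[b_{k,j}]^{-1}[b_{\ell,j}]=R_k^{-1}R_\ell$ realises the successive subtractions $\gcd(d/k,d/\ell)=\gcd(d/k,\,d/\ell-d/k)$ of the Euclidean algorithm. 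Running this recursion on the pair $(d/k,d/\ell)$, together with $[\tau]$-conjugates of the arc elements and the original periodic products (which carry the non-trivial abelianised class needed to correct the commutators), should peel the support down to a single $[b_\ell]$, with base case $d/k=d/\ell=1$ the tautology $[b_{q,m}]=[b_{k,m}]$.

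The main obstacle is precisely the control of non-commutativity in this peeling step: after each subtraction one must guarantee that the resulting element is again of a controlled shape, namely a product supported on a short arc whose braid type can be normalised by the relations above, rather than an uncontrolled word. Formalising this is exactly the purpose of a dedicated swap relation describing how a periodic product conjugates a single generator, or how two periodic products recombine as they pass one another; I would establish such a relation among the $[b_\ell]$ on a connected arc first, and only then let the induction run. The delicate point is the regime $\gcd(k,\ell)\le 2$, where a commutator meets two collisions at once and the localised element has support three instead of two, so that the simplification must be carried out with care before the Euclidean recursion can be applied.
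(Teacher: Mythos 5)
Your opening reductions are sound and coincide with the paper's: invariance of $G$ under conjugation by $[\tau]$ reduces the problem to producing a single $[b_{q,m}]$, and the pullback $f^*$ under the covering $x\mapsto x^{d/q}$ (which indeed satisfies $f^*([b_{k,j}])=[b_{k,j}]$ and $f^*([b_m])=[b_{q,m}]$) correctly reduces everything to the case $q=d$, exactly as in the paper. The gap is everything after that: the ``peeling'' step is a plan, not an argument, and it is where the entire content of the proposition lies. Concretely, your localisation device does not close up into a recursion. A commutator $[b_{k,i}]\circ[b_{\ell,j}]\circ[b_{k,i}]^{-1}\circ[b_{\ell,j}]^{-1}$ localised at a single collision is the word $[b_x]\circ[b_{x+1}]\circ[b_x]^{-1}\circ[b_{x+1}]^{-1}$, which is \emph{not} again a product of commuting generators, so there is no controlled shape to which the next step of the recursion can be applied; moreover your sample identity $[b_x][b_{x+1}][b_x]^{-1}=[b_{x+1}]^{-1}[b_x]$ is false (the braid relation gives $[b_x][b_{x+1}][b_x]^{-1}=[b_{x+1}]^{-1}[b_x][b_{x+1}]$). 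The paper resolves precisely this difficulty by replacing the commutator with the five-letter operation $[b]\bullet[\beta]:=[b]^{-1}\circ[\beta]^{-1}\circ[b]\circ[\beta]\circ[b]$ and proving (Lemma \ref{lem:swap}) that when both inputs are \emph{sparse} the output is again a \emph{simple} braid with explicitly computable support; that closure property, combined with the support-shifting Lemma \ref{lem:compactsupport}, is what lets the Euclidean recursion on supports actually run. You state that such a relation ``would'' be established first, but it is never supplied, and it is not a routine verification.

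The second genuine gap is that the cases you defer as ``delicate'' are not refinements of the generic mechanism; they break it outright and occupy most of the paper's proof. When $\ell=2$ and $\gcd(k,\ell)=1$, the braid $[b_{2,j}]$ is simple but not sparse, so no swap-type relation applies to it at all, and the paper must produce a single generator through entirely separate explicit constructions ($[\sigma_1],[\sigma_2]$ for $k=3$, and $[\sigma_3],[\sigma_4],[\sigma_5]$ for $k\geqslant 5$), verified diagrammatically. When $\gcd(k,\ell)=2$, all distances between $J_{k,1}$ and $J_{\ell,1}$ are even, so the distance-one collisions your argument relies on do not exist until one coset is shifted by $[\tau]$, and the output one can then reach is of the form $[b_j]\circ[b_{j+2}]^{-1}$ rather than $[b_j]\circ[b_{j+1}]^{-1}$, which forces the harder of the two cases of Lemma \ref{lem:compactsupport}. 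None of this can be extracted from the outline you give, so the proposal stops short of a proof exactly at the point where the proposition becomes hard.
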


\begin{proof}[Proof of Theorems \ref{thm:reduced} and \ref{thm:contractible}]
Choose $p \in A\setminus\{0, d\}$. Then, the space of conditions $\cC_A$ contains the space of conditions $\cC_{\tilde A}$ for $\tilde A := \{0,p,d\}$. In particular, the image of $\mu^\star_A$ contains the image of $\mu^\star_{\tilde A}$. If we denote $a:=\gcd(p,d)$, $p':=\frac{p}{a}$ and $d':=\frac{d}{a}$, then $\im\big(\mu^\star_{\tilde A}\big)$ contains the pullback by the covering $x\mapsto x^a$ of every braid in $\im\big(\mu^\star_{\{0,p',d'\}}\big)$ since every polynomial in $\cC_{\tilde A}$ is the composition of a polynomial in $\cC_{\{0,p',d'\}}$ with the latter covering. In particular, the image of $\mu^\star_{\tilde A}$ contains the pullback $[b_{a,j}]\in B_d^\star$
of $[b_j]\in B_{d'}^\star$ for all possible $j$, according to Corollary \ref{cor:trinomial}. It follows now from Proposition \ref{prop:euclide} that $\im(\mu^\star_A)$ contains $[b_{q,j}]$
where $q:=\lcm\big(\big\lbrace \frac{d}{\gcd(p,d)} \, \big| \, p\in A\setminus\{0, d\}\big\rbrace
\big)$. Since $A$ is reduced, the integer $q$ is equal to $d$ and $[b_{q,j}]=[b_j]\in B_d^\star$. 
Observe moreover that we can choose the loops in $\pi_1(\cC_{\{0,p',d'\}})$ giving rise to the $[b_j]
$-s in $B_{d'}^\star$ to be in $\{c_0$=$c_2$=$1\}\subset\C^{\{0,p',d'\}}$ by Corollary 
\ref{cor:trinomial}, and then the loops giving rise to $[b_{d',j}]$ and $[b_{q,j}]=[b_j]\in B_d^\star$ 
are in $\{c_0$=$c_2$=$1\}\subset\C^{\tilde A}$ and $\{c_0$=$c_2$=$1\}\subset \C^{A}$ 
respectively. This fact together with Lemma \ref{lem:generatorArtin} imply Theorem 
\ref{thm:contractible}. Theorem \ref{thm:reduced} follows from Lemma 
\ref{lem:generatorsbraidgroup} and the fact that $[\tau]$ is always contained in $\im(\mu^\star_A)
$. Indeed, the element $[\tau]$ is the image under $\mu^\star_A$ of the loop $\theta \mapsto p_
\theta(x)=x^d-e^{i\theta}$, $\theta \in [0,2\pi]$.
\end{proof}

We will now proceed to the proof of Proposition \ref{prop:euclide}. In order to do so, we will need the following terminology.

\begin{Definition}\label{def:simplesparse}
A braid $[b]\in B_d^\star$ is \emph{simple} (respectively \emph{sparse}) if it can be written as a composition $[b]=[b_{j_1}] \circ [b_{j_2}]\circ \cdots \circ  [b_{j_k}]$ for a sequence of integers $1\leqslant j_1 < j_2<\cdots<j_k \leqslant d$ such that the difference between consecutive integers is at least $2$ (respectively $3$). In particular, all the elements in the above decomposition of $[b]$ commute. Any simple braid $[b]$ is determined by the collection $\{j_1,\cdots,j_k\}$ that we refer to as the \emph{support} of $[b]$. Denote by \emph{$[b_J]$} the simple braid with support $J$ and define the support $\emph{J_n}=\left\lbrace k \in 2\Z+1 \; \big| \;   1\leqslant k \leqslant 2n-1\right\rbrace$ of cardinality $n$, for any integer $n\leqslant \lfloor d/2 \rfloor$.
\end{Definition}

Observe that the braid $[b_{k,j}]$ is always simple and that  it is sparse provided that $k\geqslant 3$. For the sake of brevity, let us introduce the notation $\emph{g\star g'}:= g \circ g' \circ g^{-1}$.

\begin{Lemma}\label{lem:swap}
Let $[b]$ and $[\beta]$ be two sparse braids in $B_d^\star$ with respective supports $J$ and $J'$. Then, the braid
\[ [b]\bullet [\beta] : = [b]^{-1}\circ [\beta]^{-1}\circ [b]\circ [\beta]\circ [b] \]
is the simple braid with support $J \bullet J'$ consisting of the union of   $J\cap J'$ with the elements in $J$ at distance at least $2$ from $J'$ with the elements of $J'$ at distance exactly $1$ from $J$.
\end{Lemma}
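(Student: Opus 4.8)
The goal is to show that the five-fold product $[b]\bullet[\beta]=[b]^{-1}\circ[\beta]^{-1}\circ[b]\circ[\beta]\circ[b]$ of two sparse braids is itself simple, with the explicitly prescribed support $J\bullet J'$. The plan is to reduce everything to the structure of the Artin generators $\sigma_i$ (the projections of the $[b_i]$) and the braid relations, then to track carefully how conjugation by a single $[b_j]$ moves an adjacent generator. The first step I would take is to reduce from arbitrary sparse $[b],[\beta]$ to the case of a single generator in $[b]$ interacting with a single generator in $[\beta]$, using the fact that a sparse braid is a commuting product of generators whose indices are at distance $\geqslant 3$. Because the supports are sparse, generators coming from far-apart indices commute with everything in sight, so the whole computation should localize around each pair $(i,j)$ with $|i-j|\leqslant 2$.

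**Key steps.**

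\textbf{Step 1 (Localization).} Since $J$ and $J'$ are sparse, write $[b]=\prod_{i\in J}[b_i]$ and $[\beta]=\prod_{j\in J'}[b_j]$ as commuting products. The plan is to expand $[b]\bullet[\beta]$ and argue that any generator $[b_i]$ with $i$ at distance $\geqslant 3$ from every element of $J'$ commutes through the entire expression and either cancels (contributing nothing to the support) or survives untouched (contributing $i$ to the support). This isolates the interesting interactions to pairs of indices at distance $1$ or $2$, and lets me treat $J\bullet J'$ index-block by index-block.

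\textbf{Step 2 (The three local cases).} For a single pair of indices I would compute $\sigma_i^{-1}\sigma_j^{-1}\sigma_i\sigma_j\sigma_i$ in the three relevant configurations: (a) $i=j$, where I expect the braid relation to collapse the expression back to $\sigma_i$, accounting for the $J\cap J'$ part of the support; (b) $|i-j|=2$ (distance exactly $2$, i.e.\ $\sigma_i,\sigma_j$ commute), where the $\sigma_j$'s cancel and $\sigma_i$ survives, accounting for ``elements of $J$ at distance $\geqslant 2$ from $J'$''; (c) $|i-j|=1$ (distance exactly $1$), where the braid relation $\sigma_i\sigma_j\sigma_i=\sigma_j\sigma_i\sigma_j$ should convert the word into a generator supported on the $J'$-side, accounting for ``elements of $J'$ at distance exactly $1$ from $J$''. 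The main computation is case (c), using $\sigma_{i\pm1}\sigma_i\sigma_{i\pm1}=\sigma_i\sigma_{i\pm1}\sigma_i$ to slide the extra generator into the right slot.

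\textbf{Step 3 (Lifting from $B_d$ to $B_d^\star$).} The Artin relations control the projections in $B_d$, but the statement is about $B_d^\star$. I would use the map $\ind:PB_d^\star\to\Z^d$ of Lemma \ref{lem:generatorsbraidgroup}, or an explicit diagrammatic check, to verify that the surface-braid lift of the local word is genuinely the simple braid $[b_{J\bullet J'}]$ and carries no extra winding. Since $[b]\bullet[\beta]$ is visibly a product whose image in $B_d$ is the expected simple Artin braid, and since it is manifestly a word in the $[b_i]$, I expect the lift to be forced once the $\ind$-obstruction is checked to vanish.

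**Main obstacle.**

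The step I expect to be hardest is \textbf{Step 1}, the bookkeeping that guarantees the interactions really do decouple into independent local pieces, together with verifying that the distance-$1$ contributions from $J'$ do not collide with surviving distance-$2$ contributions from $J$ so as to violate simplicity (i.e.\ that the resulting support has no two consecutive integers). The sparseness hypothesis (distance $\geqslant 3$ within each of $J$ and $J'$) is presumably exactly what is needed to guarantee this non-collision, so the crux is to verify cleanly that $J\bullet J'$ as defined is a legitimate \emph{simple} support and that the local rewrites of Step 2 can be performed simultaneously without interference. The Artin-relation computations in Step 2 are routine once the configurations are isolated; the real care lies in the global commutation/cancellation argument and in confirming no winding is introduced in Step 3.
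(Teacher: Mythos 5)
Your Steps 1 and 2 are correct and amount to an algebraic rendering of the paper's own argument. Sparseness gives you slightly more than you state, and this is what makes the localization clean: each $i\in J$ has at most one element of $J'$ within distance $1$, and conversely (two such elements of $J'$ would be at distance $\leqslant 2$ from each other, contradicting sparseness), so the non-commuting pairs form a matching, distinct blocks commute entirely, and the word $[b]^{-1}\circ[\beta]^{-1}\circ[b]\circ[\beta]\circ[b]$ factors as a commuting product of block-local words. The three local computations then come out exactly as you predict: $\sigma_i^{-1}\sigma_i^{-1}\sigma_i\sigma_i\sigma_i=\sigma_i$ for $i\in J\cap J'$; for commuting singletons the $J$-generator survives and the $J'$-generator cancels; and $\sigma_i^{-1}\sigma_j^{-1}\sigma_i\sigma_j\sigma_i=\sigma_j$ when $|i-j|=1$, by the braid relation. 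This is the same local analysis the paper performs, only carried out with relations instead of diagrams (the paper's $15$-configuration count and Figure \ref{fig:proof}).

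The genuine gap is Step 3. It is false that an element of $B_d^\star$ is pinned down by its image in $B_d$ together with $\ind$: already for $d=2$, the commutator of the loop in which strand $2$ encircles the puncture with the loop in which strand $2$ encircles strand $1$ is a nontrivial element of $PB_2^\star$ (these two loops generate a free group, namely the fundamental group of the fiber $\C\setminus\{0,x\}$ of the projection of the configuration space to the position $x$ of strand $1$), yet its image in $PB_2$ is trivial and its winding vector vanishes, since $\ind$ is a homomorphism and kills commutators. So checking the image in $B_d$ and checking that $\ind$ vanishes forces nothing. A further wrinkle is that when $d\in J\cup J'$ the wrap-around generator $[b_d]$ does not even project to a standard Artin generator, so the reduction to Artin combinatorics in $B_d$ is misstated in that case. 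The repair is simply to skip the detour through $B_d$: the only relations your Step 2 uses --- commutation of $[b_i]$ and $[b_j]$ at circular distance $\geqslant 2$, and the braid relation at circular distance $1$ --- hold verbatim in $B_d^\star$, because each is supported in an embedded disc in $\C^\star$ avoiding the puncture, where it is the usual disc braid group relation. With that one remark your computation takes place entirely in $B_d^\star$, Step 3 becomes unnecessary, and your proof is complete; this is also, implicitly, what the paper's diagrammatic check accomplishes.
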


\begin{proof}
First, we observe that the set $J \bullet J'$ is the support of a simple braid, that is, two consecutive indices are at distance at least $2$ from each other. This is a straightforward consequence of the fact that $[b]$ and $[\beta]$ are sparse.

If $j\in \{1,\cdots,d\}$ is such that $\{j-1,j\}$ is disjoint from $J\cup J'$,  then the $j^{th}$ strand of $[b]\bullet [\beta]$ is straight. Therefore, there are at most $15$ configurations to check: each of the indices $j$ and $j-1$ are in one of the sets $(J\cup J')^c$, $J\setminus J'$, $J'\setminus J$ or $J\cap J'$ and we should discard the case when both indices belong to $(J\cup J')^c$ by the previous argument. Among these configurations, some are prevented by the fact that $[b]$ and $[\beta]$ sparse and some configurations are redundant. All the relevant configurations are depicted in Figure \ref{fig:proof}.
\end{proof}

\begin{figure}[h]
\centering
\scalebox{0.75}{
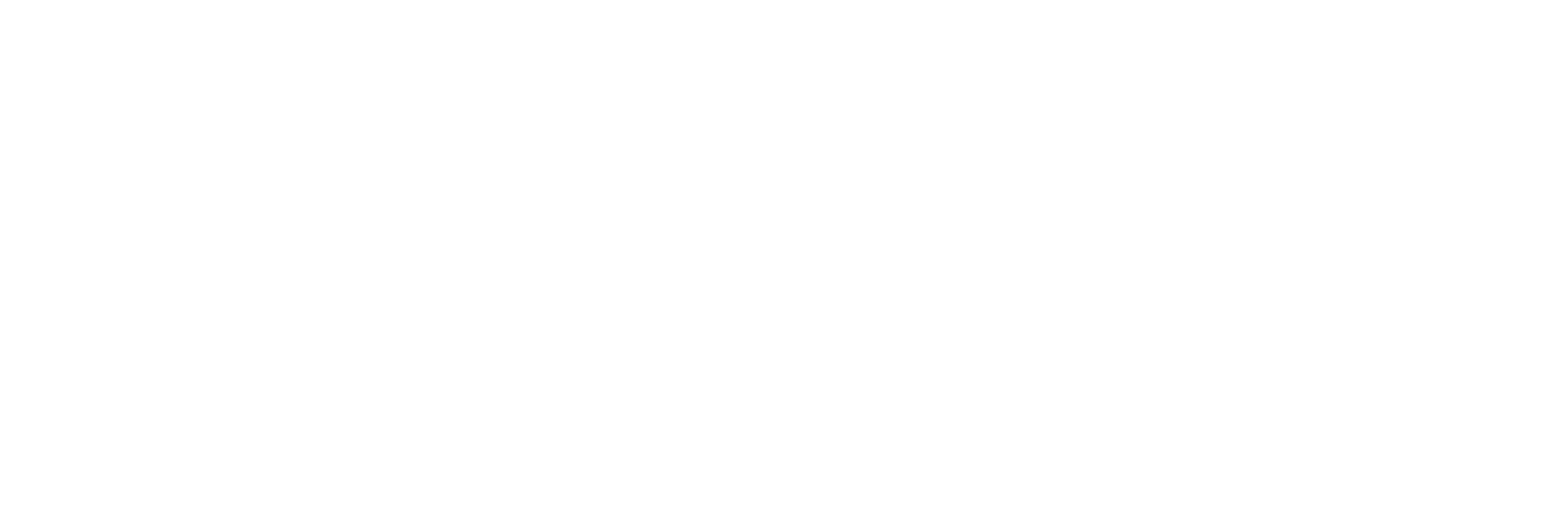
}
\caption{Local pictures for the braid $[b]\bullet [\beta]$. The first two configurations on the left correspond to indices in $J'$ at distance $1$ from $J$, the third to indices in $J\cap J'$, the fourth to indices in $J$ at distance at least $2$ from $J'$ and the last one to indices in $J'$ at distance at least $2$ from $J$.}
\label{fig:proof}
\end{figure}

\begin{Lemma}\label{lem:compactsupport}
Let $G\subset B_d^{\star}$ be a subgroup that is invariant under conjugation by $[\tau]$. Assume further that $G$ contains either $[b_j]\circ[b_j+1]^{-1}$ or $[b_j]\circ[b_j+2]^{-1}$ for some $j$. If $G$ contains a simple braid $[b_J]$, then $G$ contains the simple braid $[b_{J'}]$ for any support $J'$ such that $\# J=\# J'$.
\end{Lemma}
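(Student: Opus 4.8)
The plan is to translate the group-theoretic statement into a combinatorial connectivity problem on supports, solve it by a token-sliding argument, and isolate the one genuinely delicate point. First I would exploit the hypothesis that $G$ is invariant under conjugation by $[\tau]$. Since $[\tau]\star[b_j]=[b_{j+1}]$ (indices read cyclically), conjugation by $[\tau]$ shifts every index of a simple braid by one; hence $[b_{J+s}]\in G$ for every cyclic shift $J+s$ of $J$, and the assumed difference element becomes available at every position, i.e. $[b_\ell]\circ[b_{\ell+1}]^{-1}\in G$ for all $\ell$ (Case A) or $[b_\ell]\circ[b_{\ell+2}]^{-1}\in G$ for all $\ell$ (Case B). I then reformulate the goal: letting $\mathcal{J}_k$ be the set of simple supports of cardinality $k$, I must show that the supports $J$ with $[b_J]\in G$ form either all of $\mathcal{J}_k$ or none of it. It suffices to exhibit a family of moves that connect any two elements of $\mathcal{J}_k$ and preserve membership in $G$.

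The basic move is the following. Because the factors of a simple braid pairwise commute (Definition \ref{def:simplesparse}), if $\ell\in J$ and $m\notin J$ lies at cyclic distance at least $2$ from every element of $J\setminus\{\ell\}$, then $[b_J]=[b_\ell]\circ[b_{J\setminus\{\ell\}}]$ with commuting factors, so $[b_{(J\setminus\{\ell\})\cup\{m\}}]=([b_m]\circ[b_\ell]^{-1})\circ[b_J]$. Thus whenever $[b_m]\circ[b_\ell]^{-1}\in G$ we may replace the index $\ell$ by $m$ without leaving $G$; together with the global shift $J\mapsto J+1$ coming from $[\tau]$-invariance, this is my toolkit. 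In Case A the difference elements give exactly the one-step slides $\ell\mapsto\ell\pm1$, and a routine token-sliding argument on the cycle $\Z/d$ (clearing any blocking index first) connects any two supports in $\mathcal{J}_k$; the only rigid configuration, $d$ even and $k=d/2$, where the two supports $\{1,3,\dots,d-1\}$ and $\{2,4,\dots,d\}$ admit no internal room, is bridged by a single global $[\tau]$-shift.

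Case B is where the real difficulty lies. The difference element now slides a token by two, and on the cycle $\Z/d$ with $d$ even the moves $\ell\mapsto\ell\pm2$ together with the global shift preserve the statistic $o(J):=\#\{j\in J:\ j\ \text{odd}\}$ up to the involution $o\mapsto k-o$; consequently the naive moves alone split $\mathcal{J}_k$ into parity classes and do \emph{not} connect all supports. No homomorphism from $B_d^\star$ obstructs the desired conclusion (all $[b_j]$ are conjugate, hence equal in every abelian quotient), so the remedy must come from the non-commuting structure of the braid group rather than from the commuting moves above. The crux of the proof is therefore to manufacture inside $G$ a single parity-changing replacement $[b_\ell]\mapsto[b_{\ell\pm1}]$; I would obtain it by a short, explicit computation with the braid relation $[b_{i+1}]^{-1}\circ[b_i]\circ[b_{i+1}]=[b_i]\circ[b_{i+1}]\circ[b_i]^{-1}$, conjugating a distance-two move by a suitable element already known to lie in $G$, exactly in the local, diagrammatic spirit of Lemma \ref{lem:swap}. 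Once one such parity-breaking element is produced, its $[\tau]$-conjugates and products merge the parity classes and Case B collapses to the token-sliding argument of Case A. Verifying that this single braid-relation manipulation indeed lands in $G$ and yields an honest simple braid is the step I expect to be the main obstacle.
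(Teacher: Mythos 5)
Your overall architecture coincides with the paper's: use $[\tau]$-invariance to propagate the difference elements to every position, view simple supports as token configurations, and connect any support to a canonical one (the paper uses $J_n=\{1,3,\dots,2n-1\}$) by moves of the form $[b_{(J\setminus\{\ell\})\cup\{m\}}]=([b_m]\circ[b_\ell]^{-1})\circ[b_J]$. Your Case A is the paper's first case, and your diagnosis of Case B --- that distance-two slides together with global shifts preserve a parity statistic, so the parity classes can only be merged by exploiting the non-commutative structure of $B_d^\star$ --- is exactly the difficulty that the paper's second case is designed to overcome. But precisely there your proof stops: you assert that a parity-changing replacement can be obtained ``by a short, explicit computation with the braid relation'' and you flag its verification as the expected main obstacle. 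That computation is not a deferrable detail; it is the substance of the lemma, and the paper's proof devotes its entire second half, together with Figure \ref{fig:proof4}, to it. Concretely, when a token $j\in J$ is blocked by a token at $j-3$ (with $j-2,j-1\notin J$), the paper shows that
\[ \sigma_0:=\big(([b_{j-3}]^{-1}\circ[b_{j-1}])\star [b_J]^{-1}\big)\star[b_J] \]
is the simple braid $[b_{J'}]$ with $J'=(J\setminus\{j\})\cup\{j-1\}$; since $[b_{j-3}]^{-1}\circ[b_{j-1}]\in G$ is (the inverse of) a distance-two move, this puts $[b_{J'}]$ in $G$ whenever $[b_J]$ is, and this single-index shift is the parity-breaker you need.

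The gap also matters because your hint at how to fill it points in a direction that cannot work as stated. You propose ``conjugating a distance-two move by a suitable element already known to lie in $G$''; but no word in the distance-two moves and their $[\tau]$-conjugates alone can ever equal a parity-changing element: already for $d=4$, the images in $\Sym_4$ of all $[b_j]\circ[b_{j+2}]^{-1}$ generate the Klein four-group $\{e,(12)(34),(14)(23),(13)(24)\}$, which contains no $3$-cycle, while $[b_j]\circ[b_{j+1}]^{-1}$ projects to a $3$-cycle. Hence the computation must involve the hypothesised simple braid $[b_J]$ itself, and it only exists in a configuration where $J$ contains two indices at distance exactly $3$ --- which is why the paper interleaves the distance-two slides and the $\sigma_0$-moves inside one induction, first sliding tokens until such a blocked pair appears. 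Your proposed reduction of Case B to Case A is sound in principle (indeed $[b_{J'}]\circ[b_J]^{-1}=[b_{j-1}]\circ[b_j]^{-1}$, so the paper's move does yield a free-standing distance-one element once both simple braids lie in $G$), but without the explicit identity above, the decisive step of the proof is missing.
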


\begin{proof}
Denote $n:=\#J$. We will show that $G$ contains $[b_J]$ if and only if it contains the simple braid $[b_{J_n}]$, see Definition \ref{def:simplesparse}.

Assume first that $G$ contains $[b_j]\circ[b_{j+1}]^{-1}$ for some $j$. Since $G$ is invariant under $[\tau]$, then it contains $[b_j]\circ[b_{j+1}]^{-1}$ for any $j$. In particular, if $j\in J$ and $j-2 \notin J$ (recall that $j-1 \notin J$ since $[b_J]$ is simple), then $[b_{J}]\circ [b_{j-1}]\circ[b_j]^{-1}=[b_{J'}]$ where $J':=(J\setminus \{j\})\cup \{j-1\}$. Repeating this procedure as many time as necessary, we can shift the smallest index in $J$ to $1$, then the second smallest index to $3$ and so on, until we obtain the sought simple braid $[b_{J_n}]$. The result follows.

Assume now that $G$ contains $[b_j]\circ[b_{j+2}]^{-1}$ for some $j$, and thus for any $j$. Since $G$ is invariant under $[\tau]$, there is no loss of generality in assuming that $1\in J$. As in the previous paragraph, we want to  shift the indices of $J$ one by one to the left using elements in $G$, until we obtain the support $J_n$. For any $j\in J\setminus \{1\}$, there are two cases to consider: either $\{j-3,j-2\} \cap J = \varnothing$ or $j-3\in J$ and $j-2\notin J$ (the remaining case $j-2 \in J$ corresponds to the situation where we cannot shift $j$ any further to the left). If we are in the first case, then $[b_{J}]\circ [b_{j-2}]\circ[b_j]^{-1}=[b_{J'}]$ where $J':=(J\setminus \{j\})\cup \{j-2\}$. In the second case, we have that 
\[ \sigma_0:=\big(([b_{j-3}]^{-1}\circ[b_{j-1}])\star [b_J]^{-1}\big)\star[b_J]\]
is equal to $[b_{J'}]$ with $J'=(J\setminus \{j\})\cup \{j-1\}$, see Figure \ref{fig:proof4}. In both cases, the element $[b_{J'}]$ is in $G$ if and only if $[b_J]$ is. This proves inductively that $[b_{J_n}]\in G$ if and only if $[b_J]\in G$. The result follows.
\end{proof}

\begin{figure}[h]
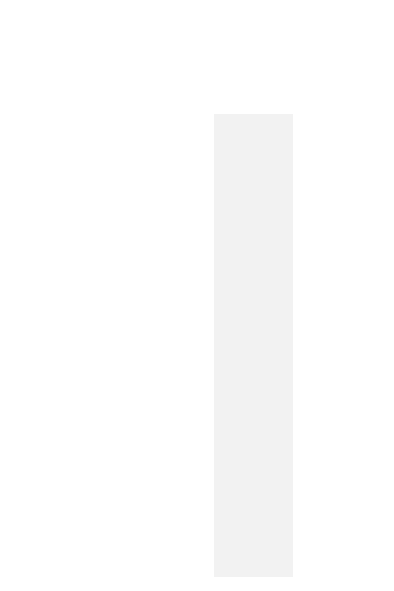
\caption{The braid $[\sigma_0]$. We only picture the strands from $j-3$ to $j+1$ since $[b_{j-3}]\circ [b_{j-1}]^{-1}$ restricts to the identity on the remaining strands. The whole products restricts therefore to $[b_J]^{-1}\circ[b_J]^2=[b_J]$ on the latter strands.}
\label{fig:proof4}
\end{figure}

\begin{proof}[Proof of Proposition \ref{prop:euclide}]
Denote by $G$ the group generated by $\cup_{1\leqslant j\leqslant d}\left\lbrace [b_{k,j}], [b_{\ell,j}]\right\rbrace$. The group $G$ is invariant by conjugation by $[\tau]$, since its generating set is. Therefore, it suffices to show that $G$ contains $[b_{q,j}]$ for a single $j$.

Denote $a:=\gcd(k,\ell)$ and define $k'$, $\ell'$, and $b$  such that $k=ak'$, $\ell=a\ell'$ and $d=abk'\ell'$. First, observe that if $k$ divides $\ell$ or $\ell$ divides $k$, there is nothing to prove. Thus, we can assume that $k'>\ell' \geqslant 2$ are coprime. Second, observe that if the statement is true for the triple $k$, $\ell$ and $d':=ak'\ell'$, then the statement is also true for $k$, $\ell$ and $d$. This is easily seen by using the covering $x\mapsto x^b$ from $\C^\star$ to itself. Therefore, there is not loss of generality in assuming that $d=ak'\ell'$. In that case $q=d$ and $[b_{q,j}]= [b_j]$. Thus, we have to show that $[b_j]$ is an element in $G$ for some $j$.

Assume first that $a \geqslant 3$. Under the present assumptions, the braid $[b_{k,1}]$ is the 
product of the $\ell'$ elements $[b_i]$ for $i\equiv 1 \mod k$ and $[b_{\ell,1}]$ is the product of 
the $k'$ elements $[b_i]$ for $i\equiv 1 \mod \ell$. Observe that $J_{k,1}\cap J_{\ell,1}=\{1\}$ and 
that the distance $\vert j'-j'' \vert $ between elements $j'\in J_{k,1}$  and $j''\in J_{\ell,1}$  is 
always divisible by $a\geqslant 3$. By Lemma \ref{lem:swap}, we obtain that $[b]:=[b_{k,1}]\bullet 
[b_{\ell,2}]\in G$ has support $(J_{k,1} \setminus \{1\})\cup \{2\}$. In turn, we deduce that $[b_{k,
1}]\circ [b]=[b_1]\circ[b_2]^{-1}$ is an element in $G$. Thus, we are in position to apply Lemma 
\ref{lem:compactsupport} and obtain that both $[b_{J_{\ell'}}]$ and $[b_{J_{k'}}]$ are in $G$. The 
result follows now from the Euclidean algorithm. Indeed, we obtain that $[b_{J_{k'-\ell'}}]\in G$ as 
the conjugation of $[b_{J_{k'}}]\circ[b_{J_{\ell'}}]^{-1}\in G$ by the appropriate power of $[\tau]$.
Inductively, we obtain that $[b_{J_{\delta}}]\in G$ where $\delta:=\gcd(\ell',k')$. Since the latter $gcd$ is $1$ by assumption, we conclude that $[b_1]\in G$. The result follows.

Assume now that $a = 2$. We will show that $G$ contains $[b_j]\circ [b_{j+2}]^{-1}$ for some $j$, apply Lemma \ref{lem:compactsupport} and conclude with the Euclidean algorithm as above. Assume first that $\ell'\geqslant 3$. Since $k'$ and $\ell'$ are coprime, the $\ell'$ elements of $J_{k,1}$ achieve the $\ell'$ classes modulo $\ell$ that are divisible by $a$. In particular, there are exactly two elements $\jm,\jp\in J_{k,1}$ such that $\jm+2\in J_{\ell,1}$ and $\jp-2\in J_{\ell,1}$. All elements in $J_{k,1}\setminus \{1,\jm,\jp\}$ are at distance at least $4$ from $J_{\ell,1}$. Since $a=3$ and $\ell'\geqslant 3$, two consecutive element in $J_{\ell,j}$ are at distance at least $6$ 
from each other. Therefore, the index $\jm$ is at distance at least $2$ from $J_{\ell,2}$ while $1$ 
and $\jp$ are at distance $1$. The remaining indices of $J_{k,1}$ are at distance at least $3$ from 
$J_{\ell,2}$. It follows that $[b_{k,1}]\bullet[b_{\ell,2}]\in G$ is equal to $[b_{J'}]$ with 
$J':= J_{k,1}\setminus \{1,\jp\})\cup \{2,\jp-1\}$. In turn, we have that $([\tau]\star[b_{J'}])\bullet 
[b_{k,1}]\in G$ is equal to  $[b_{J''}]$ with $J'':= (J_{k,1}\setminus \{1\})\cup \{3,\}$. We conclude 
that $G$ contains $[b_{k,1}]\circ [b_{J''}]^{-1}=[b_1]\circ [b_3]^{-1}$ and the result follows. If now $
\ell'=2$ (implying that $\ell=4$), then $J_{k,1}=\{1,k+1\}$ and $\{1, k-1, k+3\}\subset J_{4,1}$ since $k$ is divisible by $2$ but not
by $4$. It follows that $[\tau]\star([b_{k,1}]\bullet[b_{4,2}])\in G$ is equal to $[b_3]\circ[b_{k+1}]
$. Dividing $[b_{k,1}]$ by the latter element gives $[b_1]\circ [b_3]^{-1}\in G$. The result follows.

Assume now  that $a=1$. We postpone the case $\ell=2$ and assume for now that $\ell\geqslant 3$. Since $k$ and $\ell$ are coprime, 
there are exactly two elements $\jm$ and $\jp$ in $J_{k,1}$ that are at distance $1$ from $J_{\ell,1}$, namely the two elements with respective reduction $0$ and $2$ modulo $\ell$. Since $J_{k,1}\cap J_{\ell,1}=\{1\}$, any element in $J_{k,1}\setminus \{1,\jm,\jp\}$ is at distance at least $2$ from $J_{\ell,1}$. 
We deduce that $[b_{k,1}]\bullet [b_{\ell,1}]\in G$ is equal to $[b_{J^{(4)}}]$ with $J^{(4)}:=(J_{k,1}\setminus \{\jm,\jp\}) \cup \{\jm+1,\jp-1\}$ and then that $([\tau]\star[b_{J^{(4)}}])\bullet [b_{k,1}]\in G$ is the simple braid with support $(J_{k,1}\setminus \{\jm\})\cup\{\jm+2\}$. Dividing $[b_{k,1}]$ by the latter element, we obtain that $[b_{\jm}]\circ [b_{\jm+2}]^{-1}\in G$. The result follows again from Lemma \ref{lem:compactsupport} and the Euclidean algorithm.

It remains to treat the case $\ell=2$ and $k$ odd. For $k=3$, define the following elements of $G$
\[ [\sigma_1] := [\tau]^{-1}\star \big( [b_{2,2}]^{-1}\circ [b_{3,1}]\circ [b_{2,1}]^{-1}\big) \circ [b_{3,2}]\circ [b_{3,1}]\]
and  $ [\sigma_2] := [b_{2,1}]^{-1}\circ \big([b_{3,1}]\bullet [b_{2,1}]\big)$, see Figure \ref{fig:proof2}. Then, we have that $ [\sigma_2]^{-1}\circ [\sigma_1] = [b_{3}]\in G$ and the result follows. 

\begin{figure}[H]
\centering
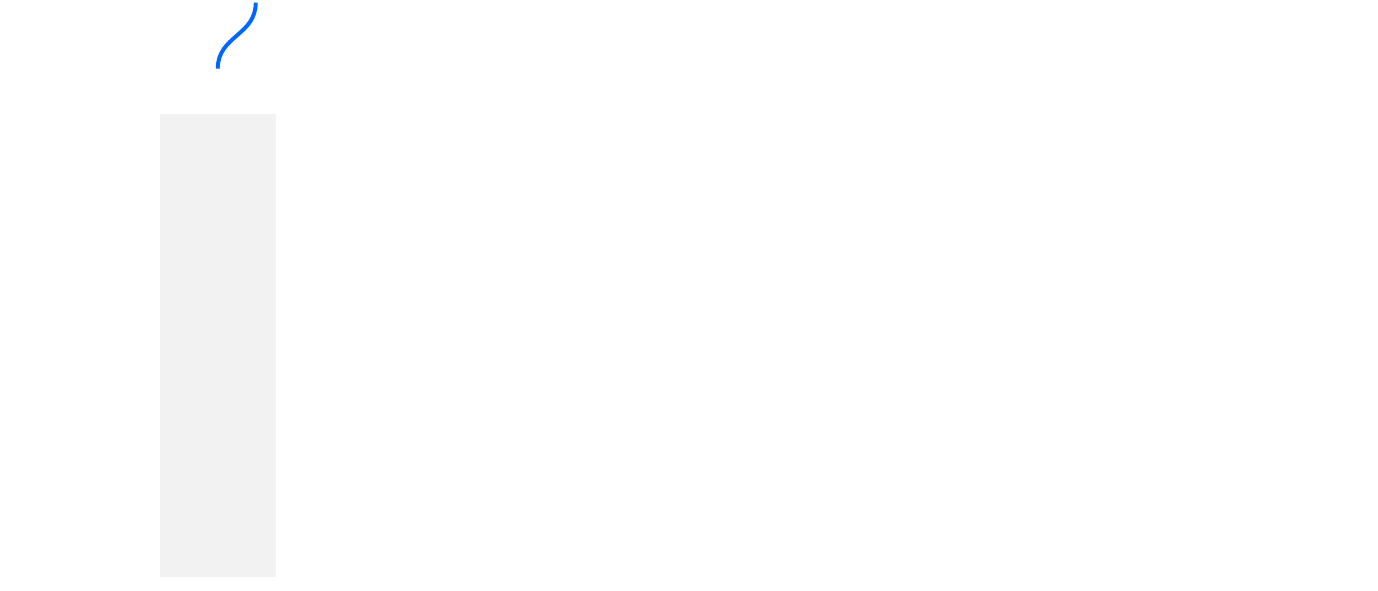
\caption{The braids $[\sigma_1]$ and $[\sigma_2]$.}
\label{fig:proof2}
\end{figure}

For $k\geqslant 5$, define the following elements of $G$\\
\[
\begin{array}{l}
\left[\sigma_3\right] :=  \big([b_{k,2}]^{-1} \circ[b_{k,1}]^{-1}\circ [b_{2,1}]^{-1}\big)\star [b_{k,1}], \quad
\left[\sigma_4\right] := [b_{k,1}] \circ [b_{k,2}]\circ [b_{k,3}] \circ [b_{k,4}]\\ \\
\text{and } \left[\sigma_5\right] := \big([\sigma_4]\circ([\tau]^{k+1}\star [\sigma_3])\circ [b_{k,1}]\big)^{-1}\star\big([\tau]\star [\sigma_3]\big).
\end{array}
\]
The latter elements are depicted in Figure \ref{fig:proof3}. In particular, we obtain that $\sigma_5=[b_2]\circ[b_{k+4}]$ and thus that $[b_1]\circ[b_{k+3}]\in G$. In turn, we obtain $[b_{k,1}]\circ ([b_1]\circ[b_{k+3}])^{-1}=[b_{k+1}]\circ[b_{k+3}]^{-1}\in G$. The result follows from Lemma \ref{lem:compactsupport} and the Euclidean algorithm.
%
\end{proof}

\begin{figure}[H]
\centering
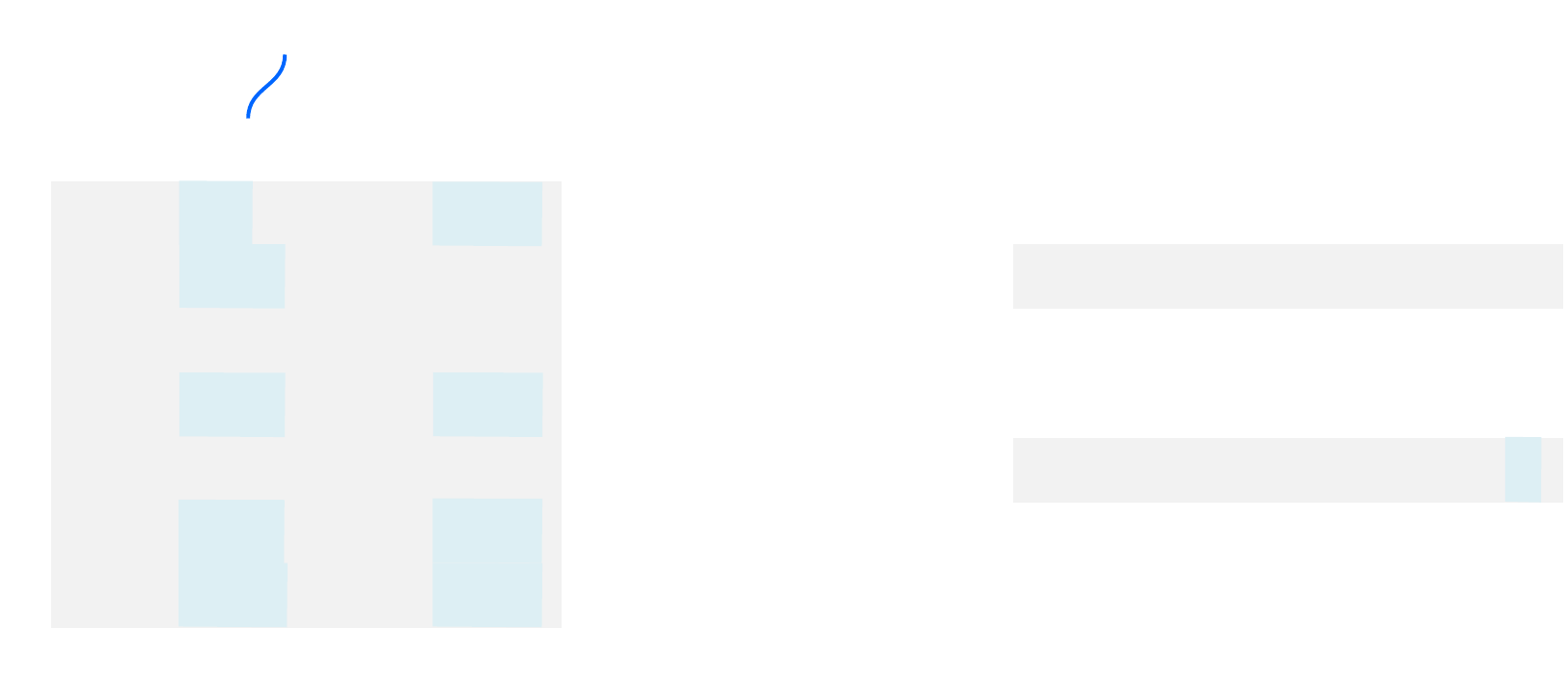
\caption{The braids $[\sigma_3]$, $[\sigma_4]$ and $[\sigma_5]$ (some of the blue areas are not present for $k=5$).}
\label{fig:proof3}
\end{figure}

\bibliographystyle{alpha}
\bibliography{Draft}

\begin{thebibliography}{{Coh}80}

\bibitem[{Arn}71]{Arn71}
V.~I. {Arnol'd}.
\newblock {On some topological invariants of algebraic functions.}
\newblock {Trans. Mosc. Math. Soc. 21 (1970), 30-52 (1971).}, 1971.

\bibitem[Art47]{Ar}
E.~Artin.
\newblock Theory of braids.
\newblock {\em Ann. of Math. (2)}, 48:101--126, 1947.

\bibitem[{Bes}01]{Be}
D.~{Bessis}.
\newblock {Zariski theorems and diagrams for braid groups.}
\newblock {\em {Invent. Math.}}, 145(3):487--507, 2001.

\bibitem[Che75]{Ch}
D.~Cheniot.
\newblock Un th\'{e}or\`eme du type de {L}efschetz.
\newblock {\em Ann. Inst. Fourier (Grenoble)}, 25(1):xi, 195--213, 1975.

\bibitem[{Coh}80]{Co}
S.~D. {Cohen}.
\newblock {The Galois group of a polynomial with two indeterminate
  coefficients.}
\newblock {\em {Pac. J. Math.}}, 90:63--76, 1980.

\bibitem[DL81]{DL}
I.~{Dolgachev} and A.~{Libgober}.
\newblock {On the fundamental group of the complement to a discriminant
  variety.}
\newblock {Algebraic geometry, Proc. Conf., Chicago Circle 1980, Lect. Notes
  Math. 862, 1-25 (1981).}, 1981.

\bibitem[EL18]{AL}
A.~{Esterov} and L.~{Lang}.
\newblock {Inductive irreducibility of solution spaces and systems of equations
  whose Galois group is a wreath product}.
\newblock {\em arXiv e-prints}, page arXiv:1812.07912, Dec 2018.

\bibitem[Est19]{E18}
A.~Esterov.
\newblock Galois theory for general systems of polynomial equations.
\newblock {\em Compos. Math.}, 155(2):229--245, 2019.

\bibitem[FM11]{farbmarg}
B.~{Farb} and D.~{Margalit}.
\newblock {\em {A primer on mapping class groups.}}
\newblock Princeton, NJ: Princeton University Press, 2011.

\bibitem[FN62]{FN}
E.~Fadell and L.~Neuwirth.
\newblock Configuration spaces.
\newblock {\em Math. Scand.}, 10:111--118, 1962.

\bibitem[GKZ08]{GKZ}
I.M. Gelfand, M.M. Kapranov, and A.V. Zelevinsky.
\newblock {\em {Discriminants, resultants, and multidimensional determinants.
  Reprint of the 1994 edition.}}
\newblock {Modern Birkh\"auser Classics. Boston, MA: Birkh\"auser. x, 523~p.},
  2008.

\bibitem[{Lan}19]{L19}
L.~{Lang}.
\newblock {Monodromy of rational curves on toric surfaces}.
\newblock {\em arXiv e-prints}, page arXiv:1902.08099, Feb 2019.

\bibitem[Lib90]{Lib90}
A.~Libgober.
\newblock On topological complexity of solving polynomial equations of special
  type.
\newblock In {\em Transactions of the {S}eventh {A}rmy {C}onference on
  {A}pplied {M}athematics and {C}omputing ({W}est {P}oint, {NY}, 1989)},
  volume~90 of {\em ARO Rep.}, pages 475--478. U.S. Army Res. Office, Research
  Triangle Park, NC, 1990.

\end{thebibliography}

\bigskip
\bigskip
\noindent
A. Esterov\\
National Research University Higher School of Economics\\
Faculty of Mathematics NRU HSE, Usacheva str., 6, Moscow, 119048, Russia\\
\textit{Email}: aesterov@hse.ru\\

\noindent
L. Lang\\
Department of Mathematics, Stockholm University, SE - 106 91 Stockholm, Sweden.\\
\textit{Email}: lang@math.su.se

\end{document}